\theoremstyle{plain}
\newtheorem{thm}{\protect\theoremname}
  \theoremstyle{remark}
  \newtheorem{rem}[thm]{\protect\remarkname}
  \theoremstyle{plain}
  \newtheorem{lem}[thm]{\protect\lemmaname}
  \theoremstyle{plain}
  \newtheorem{prop}[thm]{\protect\propositionname}
\DeclareMathOperator{\ric}{Ric_{\eta}}
  \providecommand{\lemmaname}{Lemma}
  \providecommand{\propositionname}{Proposition}
  \providecommand{\remarkname}{Remark}
\providecommand{\theoremname}{Theorem}
\begin{document}

\title{Blowing up solutions for supercritical Yamabe problems on manifolds
with non umbilic boundary}
\thanks{The first authors was supported by Gruppo Nazionale per l'Analisi Matematica, la Probabilit\`{a} e le loro Applicazioni (GNAMPA) of Istituto Nazionale di Alta Matematica (INdAM) and by project PRA from Univeristy of Pisa}

\author{Marco G. Ghimenti}
\address{M. Ghimenti, \newline Dipartimento di Matematica Universit\`a di Pisa
Largo B. Pontecorvo 5, 56126 Pisa, Italy}
\email{marco.ghimenti@unipi.it}

\author{Anna Maria Micheletti}
\address{A. M. Micheletti, \newline Dipartimento di Matematica Universit\`a di Pisa
Largo B. Pontecorvo 5, 56126 Pisa, Italy}
\email{a.micheletti@dma.unipi.it.}

\begin{abstract}
We build blowing-up solutions for a supercritical perturbation of
the Yamabe problem on manifolds with boundary, provided the dimension
of the manifold is $n\ge7$ and the trace-free part of the second
fundamental form is non-zero everywhere on the boundary.
\end{abstract}

\keywords{Non umbilic boundary, Yamabe problem, Compactness, Stability}

\subjclass[2000]{35J65, 53C21}
\maketitle

\section{Introduction}

In this paper we are interested in the existence of blowing-up solutions
to problems which are supercritical perturbation of the boundary Yamabe
problem, that is we are interested in finding a family $u_{\varepsilon}$
of solutions for the problem 
\begin{equation}
\left\{ \begin{array}{ll}
L_{g}u=0 & \text{ on }M\\
\frac{\partial}{\partial\nu}u+\frac{n-2}{2}h_{g}(x)u=(n-2)u^{\frac{n}{n-2}+\varepsilon} & \text{ on }\partial M
\end{array}\right.\label{eq:Pmain}
\end{equation}
where $\varepsilon>0$, $L_{g}:=\Delta_{g}-\frac{n-2}{4(n-1)}R_{g}$
is the conformal Laplacian, $R_{g}$ is the scalar curvature of $M$,
$h_{g}$ is the mean curvature on $\partial M$ and $\nu$ is the
outward normal.

Our main result is the following
\begin{thm}
\label{almaraz} Let $M$ be a manifold of positive type. Assume $n\ge7$,
and the trace-free second fundamental form of $\partial M$ is non
zero everywhere. Then there exists a solution $u_{\varepsilon}$ of
(\ref{eq:Pmain}) such that $u_{\varepsilon}$ blows up when $\varepsilon\rightarrow0^{+}$.
\end{thm}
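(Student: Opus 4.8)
The plan is to produce $u_\varepsilon$ by a Lyapunov--Schmidt finite-dimensional reduction, the blow-up being driven by a competition, at the scale $\delta\sim\sqrt\varepsilon$, between the supercritical exponent and the trace-free second fundamental form $\pi_g$ of $\partial M$. Since $M$ is of positive type, $\langle u,v\rangle:=\int_M(\nabla_g u\cdot\nabla_g v+\tfrac{n-2}{4(n-1)}R_g uv)\,dv_g+\tfrac{n-2}{2}\int_{\partial M}h_g uv\,d\sigma_g$ is a scalar product on $H:=H^1(M)$ with norm equivalent to the standard one; writing $i^*\colon L^{\frac{2(n-1)}{n}}(\partial M)\to H$ for the adjoint of the trace embedding, problem (\ref{eq:Pmain}) becomes the fixed-point equation $u=i^*(f_\varepsilon(u))$, $f_\varepsilon(u)=(n-2)(u^+)^{\frac{n}{n-2}+\varepsilon}$, whose positive solutions are the critical points of $J_\varepsilon(u)=\tfrac12\|u\|^2-\tfrac{n-2}{\frac{2(n-1)}{n-2}+\varepsilon}\int_{\partial M}(u^+)^{\frac{2(n-1)}{n-2}+\varepsilon}\,d\sigma_g$. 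I would work for $\varepsilon>0$ small, where the perturbed trace embedding is still compact.

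For the approximate solutions I start from the standard half-space bubble $U(z)=\alpha_n((1+z_n)^2+|\bar z|^2)^{-\frac{n-2}{2}}$ ($\Delta U=0$ in $\mathbb{R}^n_+$, $\partial_\nu U=(n-2)U^{\frac{n}{n-2}}$ on $\partial\mathbb{R}^n_+$), $U_\delta(z)=\delta^{-\frac{n-2}{2}}U(z/\delta)$, transplanted by Fermi coordinates $\psi_\xi$ at a point $\xi\in\partial M$. As is customary I first pass to a conformal metric in which Fermi coordinates at $\xi$ are conformal Fermi coordinates, so that the mean curvature $h_g$ and its relevant low-order derivatives vanish at $\xi$ while $|\pi_g(\xi)|$ — conformally invariant — stays positive. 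The naive bubble $\chi\,(U_\delta\circ\psi_\xi^{-1})$ has an error $\|{\cdot}-i^*(f_0({\cdot}))\|$ of order $\delta\sim\sqrt\varepsilon$, produced by the non-umbilic part $2\pi_{ij}(\xi)x_n$ of the metric — too large for the reduction. I would therefore add the corresponding correction, setting $W_{\delta,\xi}=\chi\,\big(U_\delta+\delta\,V_\xi(\cdot/\delta)\big)\circ\psi_\xi^{-1}$ with $V_\xi$ the decaying solution on $\mathbb{R}^n_+$ of the linearization at $U$ sourced by that term; this brings the total error to $\|W_{\delta,\xi}-i^*(f_\varepsilon(W_{\delta,\xi}))\|=o(\sqrt\varepsilon)$ uniformly for $\delta\in[\Lambda^{-1}\sqrt\varepsilon,\Lambda\sqrt\varepsilon]$, $\xi\in\partial M$ (the $\varepsilon$-part of the error being $O(\varepsilon|\log\varepsilon|)$), and it also yields the sharp $\delta^2$-coefficient in the energy.

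Now the reduction. Let $K_{\delta,\xi}=\mathrm{span}\{\partial_\delta W_{\delta,\xi},\partial_{\xi_1}W_{\delta,\xi},\dots,\partial_{\xi_{n-1}}W_{\delta,\xi}\}$ (an $n$-dimensional space: one dilation and $n-1$ boundary translations), with orthogonal complement $K_{\delta,\xi}^\perp$ and projections $\Pi_{\delta,\xi},\Pi_{\delta,\xi}^\perp$. The operator $\Pi_{\delta,\xi}^\perp\circ(\mathrm{Id}-i^*\circ f_\varepsilon'(W_{\delta,\xi}))$ restricted to $K_{\delta,\xi}^\perp$ is invertible with inverse bounded uniformly in $(\delta,\xi)$ and $\varepsilon$, the standard consequence of the nondegeneracy of $U$ (the kernel of the linearized half-space problem being exactly $K$). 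A contraction argument then gives, for $\varepsilon$ small, a unique small $\phi=\phi_{\delta,\xi,\varepsilon}\in K_{\delta,\xi}^\perp$ solving $\Pi_{\delta,\xi}^\perp(W_{\delta,\xi}+\phi-i^*(f_\varepsilon(W_{\delta,\xi}+\phi)))=0$, with $\|\phi_{\delta,\xi,\varepsilon}\|=o(\sqrt\varepsilon)$ and $C^1$-dependence on $(\delta,\xi)$; and $u_\varepsilon=W_{\delta,\xi}+\phi_{\delta,\xi,\varepsilon}$ solves (\ref{eq:Pmain}) if and only if $(\delta,\xi)$ is a critical point of $\widetilde J_\varepsilon(\delta,\xi):=J_\varepsilon(W_{\delta,\xi}+\phi_{\delta,\xi,\varepsilon})$. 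Since $\|\phi_{\delta,\xi,\varepsilon}\|^2=o(\varepsilon)$, the reduced functional differs from $J_\varepsilon(W_{\delta,\xi})$ by $o(\varepsilon)$, and expanding the latter gives $\widetilde J_\varepsilon(\delta,\xi)=c_0-c_1|\pi_g(\xi)|^2\delta^2+c_2\,\varepsilon\log\delta+c_3\,\varepsilon+o(\varepsilon)$ uniformly on $[\Lambda^{-1}\sqrt\varepsilon,\Lambda\sqrt\varepsilon]\times\partial M$, with $c_0,c_1,c_2>0$: the $c_2\,\varepsilon\log\delta$ term comes from $\delta^{-\frac{n-2}{2}\varepsilon}=1-\tfrac{n-2}{2}\varepsilon\log\delta+\dots$ in the supercritical boundary integral, and $-c_1|\pi_g(\xi)|^2\delta^2$ is the non-umbilic (Escobar) term, whose being the \emph{dominant} geometric correction is precisely where the hypothesis $n\ge7$ enters (for $n=6$ it would carry an extra $|\log\delta|$).

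To conclude, write $\delta=\sqrt\varepsilon\,d$: then $\mathcal F_\varepsilon(d,\xi):=\varepsilon^{-1}(\widetilde J_\varepsilon(\sqrt\varepsilon\,d,\xi)-c_0)-c_3-\tfrac{c_2}{2}\log\varepsilon\to\mathcal F_0(d,\xi)=c_2\log d-c_1|\pi_g(\xi)|^2 d^2$ in $C^1([d_1,d_2]\times\partial M)$, where $0<d_1<d_2$ are fixed so that the maximum $d^*(\xi)=\sqrt{c_2/(2c_1|\pi_g(\xi)|^2)}$ of $d\mapsto\mathcal F_0(d,\xi)$ — nondegenerate — lies in $(d_1,d_2)$ for every $\xi$, which is possible because $|\pi_g|^2$ is bounded away from $0$ and $\infty$ on the compact boundary $\partial M$. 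Since $\mathcal F_0(d^*(\xi),\xi)=\mathrm{const}-\tfrac{c_2}{2}\log|\pi_g(\xi)|^2$ attains its maximum over the closed manifold $\partial M$, $\mathcal F_0$ attains its maximum on $[d_1,d_2]\times\partial M$ at an interior point, with value strictly exceeding $\max\mathcal F_0$ over $\{d_1,d_2\}\times\partial M$; this strict-interior-maximum property is stable under $C^1$-small perturbation, so for $\varepsilon$ small $\mathcal F_\varepsilon$ — hence $\widetilde J_\varepsilon$ — has a critical point $(\sqrt\varepsilon\,d_\varepsilon,\xi_\varepsilon)$. The corresponding $u_\varepsilon$ solves (\ref{eq:Pmain}); it is positive, since $u_\varepsilon=i^*(f_\varepsilon(u_\varepsilon))\ge0$ by positive type and the maximum principle while $u_\varepsilon\not\equiv0$, whence $u_\varepsilon>0$ by the Hopf lemma; and it blows up because $\delta_\varepsilon=\sqrt\varepsilon\,d_\varepsilon\to0$ forces $\|u_\varepsilon\|_{L^\infty}\to\infty$. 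I expect the main difficulty to be the energy expansion: identifying the $\delta^2$-coefficient (to which $W_{\delta,\xi}$, the correction $V_\xi$ and the quadratic term in $\phi_{\delta,\xi,\varepsilon}$ all contribute), checking it equals $-c_1|\pi_g(\xi)|^2$ with $c_1>0$, and showing that every remaining curvature or remainder contribution (Weyl tensor, higher derivatives of $\pi_g$ and $h_g$, the inhomogeneity $\rho_\xi^\varepsilon$ created by the conformal change, cross terms with $\phi_{\delta,\xi,\varepsilon}$) is $o(\varepsilon)$ uniformly — this is what pins down $n\ge7$ and requires the conformal Fermi normalization together with sharp $C^1$ estimates on $\phi_{\delta,\xi,\varepsilon}$; the reduction machinery itself is routine.
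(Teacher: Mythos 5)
Your overall architecture — conformal normalization of the boundary, bubble plus the non-umbilic correction $V$ at scale $\delta\sim\sqrt\varepsilon$, Lyapunov--Schmidt reduction, energy expansion with leading terms $\delta^2\varphi(q)+C\varepsilon\log\delta$ with $\varphi<0$ and $C>0$, then selection of an interior maximum — is the same as the paper's. The one genuine gap is the claim that for $\varepsilon>0$ small ``the perturbed trace embedding is still compact.'' It is not: for the \emph{supercritical} exponent $\tfrac{2(n-1)}{n-2}+\varepsilon$ with $\varepsilon>0$, $H^1(M)$ does \emph{not} embed in $L^{\frac{2(n-1)}{n-2}+\varepsilon}(\partial M)$, so $J_\varepsilon$ is not even defined on $H^1(M)$, the operator $i^*\circ f_\varepsilon$ does not map $H^1$ into itself, and the contraction argument for $\phi$ cannot be run in $\|\cdot\|_{H^1}$ alone. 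This is exactly the issue that distinguishes this problem from the (standard, subcritical) $\varepsilon<0$ case, and the paper's fix is to carry out the whole reduction in the smaller Banach space $\mathcal H=H^1(M)\cap L^{s_\varepsilon}(\partial M)$, $s_\varepsilon=\tfrac{2(n-1)}{n-2}+n\varepsilon$, with norm $\|u\|_H+|u|_{L^{s_\varepsilon}(\partial M)}$, using the Nittka regularity estimate (Remark \ref{rem:Nit}) to show that $i^*\colon L^{\frac{2(n-1)+n(n-2)\varepsilon}{n+(n-2)\varepsilon}}(\partial M)\to L^{s_\varepsilon}(\partial M)$ is bounded, so that $u\mapsto i^*(f_\varepsilon(u))$ maps $\mathcal H$ into $\mathcal H$. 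Your Lemmas on the invertibility of $L$, the estimate $\|R\|=O(\varepsilon|\log\varepsilon|)$, and the estimate on $\phi$ must all be proved in $\|\cdot\|_{\mathcal H}$; without this modification the reduction does not close.

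A minor point: you write the geometric correction as $-c_1|\pi_g(\xi)|^2\delta^2$ with a universal constant $c_1$. In fact $\varphi(q)$ has \emph{two} contributions, $\tfrac12\int_{\mathbb R^n_+}\Delta v_q\,v_q$ and $-\tfrac{(n-6)(n-2)\omega_{n-1}I^n_{n-1}}{4(n-1)^2(n-4)}\|\pi(q)\|^2$, and the first is a quadratic form in the full tensor $h_{ij}(q)$ that need not be proportional to $\|\pi\|^2$. For the argument this is harmless — all one needs is $\varphi(q)$ strictly negative and $C^1$, which follows from $\int\Delta v_q\,v_q\le0$, $(n-6)(n-2)/(n-4)>0$ for $n\ge7$, and the non-umbilic hypothesis — but stating $\varphi=-c_1|\pi|^2$ overstates what the expansion gives. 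The remainder of your reduction, the choice $\delta=\sqrt\varepsilon\,d$, the $C^0$-stable interior maximum argument, the positivity via Hopf, and the conclusion that $\delta_\varepsilon\to0$ forces blow-up, all match the paper.
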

This result can be read in a threefold way. At first, it is an existence
result for a supercritical Yamabe type problem in manifolds with boundary.
Secondly, it says that the family of solutions of this supercritical
problem is not a $C^{2}$ compact set, in fact it is not possible
find an uniform $C^{2}$ bound to the set $\left\{ u_{\varepsilon}\in H_{g}^{1}(M)\ :\ u_{\varepsilon}\text{ solution of (\ref{eq:Pmain})}\right\} _{\varepsilon}$.
This represents an obstrucion to the extension of the compactness
result of \cite{A3} to the supercritical case. Finally, Theorem \ref{almaraz}
has an interpretation in the sense of stabililty of Yamabe boundary
problem. Following Druet \cite{dru}, we say that the Yamabe boundary
problem 
\begin{equation}
\left\{ \begin{array}{ll}
L_{g}u=0 & \text{ on }M\\
\frac{\partial}{\partial\nu}u+\frac{n-2}{2}h_{g}(x)u=(n-2)u^{\frac{n}{n-2}} & \text{ on }\partial M
\end{array}\right.\label{eq:Pmain-1}
\end{equation}
is stable with respect to perturbation from above if, for any sequence
$\varepsilon_{n}\rightarrow0$, and any sequence of $u_{\varepsilon_{n}}$
solution of (\ref{eq:Pmain}) (with $\varepsilon_{n}$ as a parameter)
then $u_{\varepsilon_{n}}$ converges, up to subsequence, to a solution
$u_{0}$ of (\ref{eq:Pmain-1}) in $C^{2}(M)$. Since by our result
we showed a family of solutions which blows up while $\varepsilon\rightarrow0$,
then the Yamabe boundary problem is not stable from perturbation of
the critical exponent from above. Notice that in the same spirit the
result of Almaraz in \cite{A3} states also that the Yamabe boundary
problem is stable with respect of perturbation from below of the critical
exponent. 

We gives some recall of the classical Yamabe problem and to the Yamabe
boundary problem which allow us to give a framework to our result
and to clarifies the previous considerations.

\subsection{The Yamabe problem}

In 1960 Yamabe \cite{yam} raised the following question: Given $(M,g)$
a compact Riemannian manifold of dimension $n\ge3$, without boundary,
it is possible to find in the conformal class of $g$ a metric $\tilde{g}$
of constant scalar curvature?

Analitically this problem is equivalent to find positive solution
of the critical problem
\begin{equation}
-\Delta_{g}u+\frac{n-2}{4(n-1)}R_{g}u=cu^{\frac{n+2}{n-2}}\text{ in }M.\label{eq:Yam}
\end{equation}
Here $-\Delta_{g}$ is the Laplace Beltrami operator and $R_{g}$
is the scalar curvature of the original metric. In this case the new
metric $\tilde{g}:=u^{\frac{4}{n-2}}g$ has scalar curvature $R_{\tilde{g}}\equiv\frac{4c(n-1)}{n-2}$.
A positive answer to this problem was given by Yamabe \cite{yam},
Trudinger \cite{tru}, Aubin \cite{aub} and Schoen \cite{sch}. 

Problem (\ref{eq:Yam}) has a variational structure and a solution
could be find as a critical point of the quotient
\[
Q_{M}(u):=\frac{\int\limits _{M}\left(|\nabla u|^{2}+\frac{n-2}{4(n-1)}R_{g}u^{2}\right)d\mu_{g}}{\left(\int\limits _{M}|u|^{\frac{2n}{n-2}}d\sigma_{g}\right)^{\frac{n-2}{n}}},\ u\in H_{g}^{1}(M).
\]
It is well known that if 
\[
Q(M)=\inf_{u\in H_{g}^{1}(M)}Q_{M}(u)>0
\]
(otherwise said the manifold is of \emph{positive type}) the solution
of (\ref{eq:Yam}) is not unique, while uniqueness holds, up to symmetries,
if $Q(M)\le0$. At this point people start asking wheter if, given
a manifold of positive type, the family of solutions of (\ref{eq:Yam})
is $C^{2}$ compact, that is there is a $C^{2}$ uniform estimate
on the set of solutions. The question has been solved by S. Brendle,
M. A. Khuri, F. C. Marques and R. Schoen in a series of works \cite{bre,bre-mar,khu-mar-sch}.
Their result can be summarized as follows: compactness holds for dimensions
$3\le n\le24$ if the manifold is not conformally equivalent to a
round sphere, while for $n\ge25$ it is possible to construct manifolds
for which the set of solutions is not a compact set. 

In a series of paper Druet, Hebey and Robert (\cite{dru,dru-heb,DHR}
and the reference therein) studied the compactness of the solutions
of Yamabe type problems in which the linear term has the form $a(x)u$
where $a$ is a smooth function on $M$. As a further result they
proved the following stability result. Given a sequence a sequence
of smooth functions $a_{j}$ converging in $H^{2}(M)$ to $R_{g}$
for $j\rightarrow\infty$, with $a_{j}(x)\le R_{g}$ for all $x\in M$
and for all $j$, then the sequence of $\{u_{j}\}_{j}$ solutions
of 
\begin{equation}
-\Delta_{g}u+\frac{n-2}{4(n-1)}a_{j}u=cu^{\frac{n+2}{n-2}}\text{ in }M.\label{eq:Yam-1}
\end{equation}
converges, up to subsequences, to a solution $u_{\infty}$ of (\ref{eq:Yam})
while $j\rightarrow\infty$. 

In this sense they proved that the Yamabe problem (\ref{eq:Yam})
is stable with respect to the perturbations from below of the linear
term. Also, they showed blow up phenomena -and so instability- with
other choiches of perturbation of linear term. After them, the stability
vs. instability of (\ref{eq:Yam}) has been studied by several authors.
We limit ourselves to cite \cite{EPV,RV}, and \cite{DH}. In the
last paper a sharp description of the blow up profile for solutions
of perturbed problem is given. 

In a similar spirit Micheletti Pistoia and Vetois \cite{MPV} proved
blow up for a class of Yamabe type problems with slightly subcritical
and slightly supercritical nonlinearity.

\subsection{Boundary Yamabe problem}

In 1992 Escobar \cite{E92} generalized the classical Yamabe problem
to compact Riemannian manifolds with regular boundary. In this case
one can ask if there exists a conformal metric $\tilde{g}$ which
has both constant scalar curvature and constant mean curvature of
the boundary. In this case the analytic formulation for the Yamabe
problem takes the form
\begin{equation}
\left\{ \begin{array}{ll}
-\Delta_{g}u+\frac{n-2}{4(n-1)}R_{\varepsilon}u=c_{1}u^{\frac{n+2}{n-2}} & \text{ on }M\\
\frac{\partial}{\partial\nu}u+\frac{n-2}{2}h_{g}(x)u=c_{2}u^{\frac{n}{n-2}} & \text{ on }\partial M
\end{array}\right..\label{eq:YamB}
\end{equation}
Tipically people addressing to this problem fix one among the constant
$c_{1},c_{2}$ to be zero. In the case $c_{1}\neq0$ we limit ourselves
to cite -besides Escobar- Han and Li \cite{HL99}, Ambrosetti, Li
and Malchiodi \cite{ALM}, Djadli, Malchiodi, Ould Ahmedou \cite{DMO03,DMO04}
and the recent paper of Disconzi and Khuri which studied also compactness
of solutions for problem (\ref{eq:YamB}). The other case, in which
the target manifold is scalar flat while $c_{2}\neq0$, is interesting
since can be interpreted also as a multidimensional version of the
Riemann mapping problem. In addition, in this case the nonlinearity
appears in the boundary condition and it is critical for the Sobolev
immersion of $H^{1}(M)$ in $L^{p}(\partial M)$. The analytical version
of this problem is (\ref{eq:Pmain-1}) and its variational formulation
consists in finding critical points of the quotient 
\[
Q_{\partial}(u):=\frac{\int\limits _{M}\left(|\nabla u|^{2}+\frac{n-2}{4(n-1)}R_{g}u^{2}\right)d\mu_{g}+\int\limits _{\partial M}\frac{n-2}{2}H_{g}u^{2}d\sigma_{g}}{\left(\int\limits _{\partial M}|u|^{\frac{2(n-1)}{n-2}}d\sigma_{g}\right)^{\frac{n-2}{n-1}}},\ u\in H_{g}^{1}(M)
\]
 and in complete analogy with the Yamabe problem when 
\[
Q_{\partial}(M):=\inf\limits _{u\in H_{g}^{1}(M)}Q_{\partial}(u)
\]
is positive the solution is no more unique. For Problem (\ref{eq:Pmain-1})
the principal existence results are due to Escobar \cite{E92,E92-b,E96},
Marques \cite{Ma1,Ma2} and Almaraz \cite{A1}. Recenlty Brendle and
Chen \cite{BC} and Mayer and Ndiaye \cite{MN} covered all the remaining
cases. Concerning compactness of solutions in manifold of positive
type the first result for (\ref{eq:Pmain-1}) is given by Felli and
Ould Ahmedou which in \cite{FO03} proved compactness for scalar flat
manifolds not conformally equivalent to the closed disk, using the
Positive Mass Theorem. Compactness has been proved also for manifolds
whose trace-free part of the second fundamental form is non-zero everywhere
on the boundary by Almaraz \cite{A3} for dimensions $n\ge7$ and
recently extended to dimension $n=5,6$ by Kim, Musso and Wey \cite{KMW19},
and to dimensions $n=3$ \cite{ALM} and $n=4$ \cite{KMW19} without
any further assumption on the second fundamental form. When the trace-free
part of the second fundamental form is non-zero everywhere on the
boundary the boundary of the manifold is said \emph{non umbilic }and
Almaraz exploited this condition to bypass the Positive Mass Theorem.
This strategy has been recently adapted to manifold with \emph{umbilic}
boundary, that is when the tensor of the second funtamental form vanishes
on the boundary, by the authors, provided that the Weyl tensor is
always different from zero on the boundary and $n\ge6$ in \cite{GM20,GMsub}.

Dimension $n=24$ appears to be relevant also in boundary Yamabe problems,
in fact Almaraz in proved that for $n\ge25$ there exists manifold
with umbilic boundary for which compactness of solutions fails. Compactness
for manifold with umbilic boundary for $n\le24$ with no further assumption
is still an open problem. 

There is another strong analogy between classical and boundary Yamabe
problems about stability. In fact, in the aforementioned works of
Druet Hebey and Robert \cite{dru,dru-heb,DHR} Yamabe problem (\ref{eq:Yam})
appears to be stable for perturbation of the scalar curvature $R_{g}$
from below, while there are several examples of instability when $R_{\varepsilon}-R_{g}$
is somewhere positive. This phenomenon appears also in boundary Yamabe
problem when one try to perturb the mean curvature term: stabiliy
depend on the sign of $h_{\varepsilon}-h_{g}$. Indeed, in \cite{GMP18,GMP19}
the authors with Pistoia proved that the perturbed problem \textit{
\begin{equation}
\left\{ \begin{aligned} & L_{g}u=0\ \text{ in }M\\
 & \partial_{\nu}u+\frac{n-2}{2}h_{g}u+\varepsilon\gamma u=u^{\frac{2(n-1)}{n-2}-1}\text{ on }\partial M.
\end{aligned}
\right.\label{eq:Peps}
\end{equation}
}\textit{\emph{when $\gamma\in C^{2}(M)$ is strictly positive on
$\partial M$ then there is a family of solution which blows up as
$\varepsilon\rightarrow0$ when $n\ge7$ and the boundary is non umbilic
or $n\ge11$, the manifold has umbilic boundary and the Weyl tensor
never vanishes od $\partial M$ (this second case has been recently
extended up to dimension $8$ in \cite{GMwip}). In these two papers
the authors also construct examples of $\gamma$ which changes sign
for which the Yamabe boundary problem is not stable. On the other
hand, when $\gamma$ is everywhere negative on $\partial M$ the Yamabe
boundary problem appears to be stable for perturbation of mean curvature
as proved in \cite{GMdcds}, in the umbilic boundary case for $n\ge7$
and in non umbilic case when $n>8$ and the Weyl tensor does not vanish
on $\partial M$ and when $n=8$ with a slighter restrictive assumption
on the Weyl tensor, so the analogy with the role of scalar curvature
for classical Yamabe problem is complete. }}

In both classical and boundary Yamabe problems, as a corollary of
the compactness of solutions, people get that the problem is also
stable for perturbation from below of the critical exponent. So stability
is proved for scalar flat manifolds in \cite{FO03}, for manifolds
with non umbilic boundary in \cite{A3,KMW19}, and for umbilic boundary
manifolds whose Weyl tensor never vanishes on $\partial M$ in \cite{GM20,GMsub}.
On the other hand, in the present paper (and in \cite{GMwip} for
umbilic boundary manifolds) we prove that Yamabe boundary problem
is unstable with respect of perturbation form above of the critical
exponent.

Recently, in \cite{Sou}, The Yamabe type problem
\begin{equation}
\left\{ \begin{array}{ccc}
-\Delta_{g}u+A(x)u=0 &  & \text{on }M;\\
\frac{\partial u}{\partial\nu}-B(x)u=(n-2)\left(u^{+}\right)^{\frac{n}{n-2}} &  & \text{on \ensuremath{\partial}}M.
\end{array}\right.\label{eq:P-1}
\end{equation}
is studied and there are a series of compactness results which depend
on the sign of $A-R_{g}$ and $B-h_{g}$.

\section{Preliminaries. }

It is well known that there exists a global conformal transformation
which maps the manifold $M$ in a manifold for which the mean curvature
of the boundary is identically zero. In order to simplify our problem,
we choose a metric $(M,g)$ such that $h_{g}\equiv0$. We also set
$a=\frac{n-2}{4(n-1)}R_{g}$, so Problem (\ref{eq:Pmain}) reads as

\begin{equation}
\left\{ \begin{array}{ccc}
-\Delta_{g}u+au=0 &  & \text{on }M;\\
\frac{\partial u}{\partial\nu}=(n-2)\left(u^{+}\right)^{\frac{n}{n-2}+\varepsilon} &  & \text{on \ensuremath{\partial}}M.
\end{array}\right.\label{eq:P}
\end{equation}
Since the manifold is of positive type, then 
\[
\left\langle \left\langle u,v\right\rangle \right\rangle _{H}=\int_{M}(\nabla_{g}u\nabla_{g}v+auv)d\mu_{g}
\]
is an equivalent scalar product in $H_{g}^{1}$, which induces to
the equivalent norm $\|\cdot\|_{H}$. 

We define the exponent 
\[
s_{\varepsilon}=\frac{2(n-1)}{n-2}+n\varepsilon
\]
and the Banach space $\mathcal{H}:=H^{1}(M)\cap L^{s_{\varepsilon}}(\partial M)$
endowed with norm $\|u\|_{\mathcal{H}}=\|u\|_{H}+|u|_{L^{s_{\varepsilon}}(\partial M)}.$By
trace theorems, we have the following inclusion $W^{1,\tau}(M)\subset L^{t}(\partial M)$
for $t\le\tau\frac{n-1}{n-\tau}$. 

We recall the following result, by Nittka \cite[Th. 3.14]{Nit}
\begin{rem}
\label{rem:Nit}Let $\frac{2n}{n+2}\le q<\frac{n}{2}$, $r>0$. Then
there exists a constant $c$ such that if $f_{0}\in L^{q+r}(\Omega)$,
$\beta$ bounded and measurable and $g\in L^{\frac{(n-1)q}{n-q}+r}(\partial\Omega)$
and $u\in H^{1}(\Omega)$ is the unique weak solution of 
\[
\left\{ \begin{array}{ll}
Lu=f_{0} & \text{ on }\Omega\\
\frac{\partial}{\partial\nu}u+\beta u=g & \text{ on }\partial\Omega
\end{array}\right.
\]
where $L$ is a strictly elliptic second order operator, then 
\[
u\in L^{\frac{nq}{n-2q}}(\Omega),\ \left.u\right|_{\partial\Omega}L^{\frac{(n-1)q}{n-2q}}(\partial\Omega)\text{ and}
\]
\[
|u|_{L^{\frac{nq}{n-2q}}(\Omega)}+|u|_{L^{\frac{(n-1)q}{n-2q}}(\partial\Omega)}\le\left|f_{0}\right|_{L^{q+r}(\Omega)}+\left|g\right|_{L^{\frac{(n-1)q}{n-q}+r}(\partial\Omega)}
\]
\end{rem}
We consider $i:H^{1}(M)\rightarrow L^{\frac{2(n-1)}{n-2}}(\partial M)$
and its adjoint with respect to $\left\langle \left\langle \cdot,\cdot\right\rangle \right\rangle _{H}$
\[
i^{*}:L^{\frac{2(n-1)}{n}}(\partial M)\rightarrow H^{1}(M)
\]
defined by
\[
\left\langle \left\langle \varphi,i^{*}(g)\right\rangle \right\rangle _{H}=\int_{\partial M}\varphi gd\sigma\text{ for all }\varphi\in H^{1}
\]
so that $u=i^{*}(g)$ is the weak solution of the problem
\begin{equation}
\left\{ \begin{array}{ll}
-\Delta_{g}u+a(x)u=0 & \text{ on }M\\
\frac{\partial}{\partial\nu}u=g & \text{ on }\partial M
\end{array}\right..\label{eq:istellasopra}
\end{equation}
By \cite[Th. 3.14]{Nit} (see Remark \ref{rem:Nit}) we have that,
if $u\in H^{1}$ is a solution of (\ref{eq:istellasopra}), then for
$\frac{2n}{n+2}\le q<\frac{n}{2}$ and $r>0$ it holds
\begin{equation}
|u|_{L^{\frac{(n-1)q}{n-2q}}(\partial M)}=|i^{*}(g)|_{L^{\frac{(n-1)q}{n-2q}}(\partial M)}\le|g|_{L^{\frac{(n-1)q}{n-q}+r}(\partial M)}.\label{eq:nittka}
\end{equation}
By this result, we can choose $q,r$ such that
\begin{equation}
\frac{(n-1)q}{n-2q}=\frac{2(n-1)}{n-2}+n\varepsilon\text{ and }\frac{(n-1)q}{n-q}+r=\frac{2(n-1)+n(n-2)\varepsilon}{n+(n-2)\varepsilon}\label{eq:nittka1}
\end{equation}
that is 
\[
q=\frac{2n+n^{2}\left(\frac{n-2}{n-1}\right)\varepsilon}{n+2+2n\left(\frac{n-2}{n-1}\right)\varepsilon}\text{ and }r=\frac{2(n-1)+n(n-2)\varepsilon}{n+(n-2)\varepsilon}-\frac{2(n-1)+n(n-2)\varepsilon}{n+\left(n-2\right)\left(\frac{n}{n-1}\right)\varepsilon};
\]
so we have that, if $u\in L^{\frac{2(n-1)}{n-2}+n\varepsilon}(\partial M)$,
then $f_{\varepsilon}(u)\in L^{\frac{2(n-1)+n(n-2)\varepsilon}{n+\varepsilon(n-2)}}(\partial M)$
and, in light of (\ref{eq:nittka}), that also $i^{*}\left(f_{\varepsilon}(u)\right)\in L^{\frac{2(n-1)}{n-2}+n\varepsilon}(\partial M)$
. Here $f_{\varepsilon}(u)=(n-2)\left(u^{+}\right)^{\frac{n}{n-2}+\varepsilon}$. 

At this point we are allowed to rewrite Problem (\ref{eq:P}) as 
\begin{equation}
u=i^{*}\left(f_{\varepsilon}(u)\right),\ u\in\mathcal{H}.\label{eq:P*}
\end{equation}

We recall now some properties of the Fermi coordinates: since we choose
a conformal metric for which $H_{g}\equiv0$, we have the following
expansions in a neighborhood of $y=0$ (we use the notation $y=(z,t)$,
$z\in\mathbb{R}^{n}$ and $t\ge0$). We use the convention that $a,b,c,d=1,\dots,n$
and $i,j,k,l=1,\dots,n-1$ and the einstein convention on repeated
indices. 
\begin{align}
|g(y)|^{1/2}= & 1-\frac{1}{2}\left[\|\pi\|^{2}+\ric(0)\right]t^{2}-\frac{1}{6}\bar{R}_{ij}(0)z_{i}z_{j}+O(|y|^{3})\label{eq:|g|}\\
g^{ij}(y)= & \delta_{ij}+2h_{ij}(0)t+\frac{1}{3}\bar{R}_{ikjl}(0)z_{k}z_{l}+2\frac{\partial h_{ij}}{\partial z_{k}}(0)tz_{k}\nonumber \\
 & +\left[R_{injn}(0)+3h_{ik}(0)h_{kj}(0)\right]t^{2}+O(|y|^{3})\label{eq:gij}\\
g^{an}(y)= & \delta_{an}\label{eq:gin}
\end{align}
where $\pi$ is the second fundamental form and $h_{ij}(0)$ are its
coefficients, $\bar{R}_{ikjl}(0)$ and $R_{abcd}(0)$ are the curvature
tensor of $\partial M$ and $M$, respectively, $\bar{R}_{ij}(0)=\bar{R}_{ikjk}(0)$
are the coefficients of the Ricci tensor, and $\ric(0)=R_{nini}(0)=R_{nn}(0)$
(see \cite{E92}).

We conclude these preliminaries introducing the integral quantity
\[
I_{m}^{\alpha}=\int_{0}^{\infty}\frac{\rho^{\alpha}}{(1+\rho^{2})^{m}}d\rho
\]
and the following identities which are obtained by direct computation.

\begin{align}
I_{m}^{\alpha}=\frac{2m}{\alpha+1}I_{m+1}^{\alpha+2} & \text{ for }\alpha+1<2m\nonumber \\
I_{m}^{\alpha}=\frac{2m}{2m-\alpha-1}I_{m+1}^{\alpha} & \text{ for }\alpha+1<2m\label{eq:Iam}\\
I_{m}^{\alpha}=\frac{2m-\alpha-3}{\alpha+1}I_{m}^{\alpha+2} & \text{ for }\alpha+3<2m.\nonumber 
\end{align}

\section{The finite dimensional reduction.\label{sec:reduction}}

Given $q\in\partial M$ and $\psi_{q}^{\partial}:\mathbb{R}_{+}^{n}\rightarrow M$
the Fermi coordinates in a neighborhood of $q$, we define 
\begin{align*}
W_{\delta,q}(\xi) & =U_{\delta}\left(\left(\psi_{q}^{\partial}\right)^{-1}(\xi)\right)\chi\left(\left(\psi_{q}^{\partial}\right)^{-1}(\xi)\right)=\\
 & =\frac{1}{\delta^{\frac{n-2}{2}}}U\left(\frac{y}{\delta}\right)\chi(y)=\frac{1}{\delta^{\frac{n-2}{2}}}U\left(x\right)\chi(\delta x)
\end{align*}
where $y=(z,t)$, with $z\in\mathbb{R}^{n-1}$ and $t\ge0$, $\delta x=y=\left(\psi_{q}^{\partial}\right)^{-1}(\xi)$
and $\chi$ is a radial cut off function, with support in ball of
radius $R$.

Here $U_{\delta}(y)=\frac{1}{\delta^{\frac{n-2}{2}}}U\left(\frac{y}{\delta}\right)$
is the one parameter family of solution of the problem 
\begin{equation}
\left\{ \begin{array}{ccc}
-\Delta U_{\delta}=0 &  & \text{on }\mathbb{R}_{+}^{n};\\
\frac{\partial U_{\delta}}{\partial t}=-(n-2)U_{\delta}^{\frac{n}{n-2}} &  & \text{on \ensuremath{\partial}}\mathbb{R}_{+}^{n}.
\end{array}\right.\label{eq:Udelta}
\end{equation}
and ${\displaystyle U(z,t):=\frac{1}{\left[(1+t)^{2}+|z|^{2}\right]^{\frac{n-2}{2}}}}$
is the standard bubble in $\mathbb{R}_{+}^{n}$.

Now, let us consider the linearized problem 
\begin{equation}
\left\{ \begin{array}{ccc}
 & -\Delta\phi=0 & \text{on }\mathbb{R}_{+}^{n},\\
 & \frac{\partial\phi}{\partial t}+nU^{\frac{2}{n-2}}\phi=0 & \text{on \ensuremath{\partial}}\mathbb{R}_{+}^{n},\\
 & \phi\in H^{1}(\mathbb{R}_{+}^{n}).
\end{array}\right.\label{eq:linearizzato}
\end{equation}
and it is well know (see, for instance, \cite[Lemma 6]{GMP18}) that
 every solution of (\ref{eq:linearizzato}) is a linear combination
of the functions $j_{1},\dots,j_{n}$ defined by . 
\begin{eqnarray}
j_{i}=\frac{\partial U}{\partial x_{i}},\ i=1,\dots n-1 &  & j_{n}=\frac{n-2}{2}U+\sum_{i=1}^{n}y_{i}\frac{\partial U}{\partial y_{i}}.\label{eq:sol-linearizzato}
\end{eqnarray}

Given $q\in\partial M$ we define, for $b=1,\dots,n$ 
\[
Z_{\delta,q}^{b}(\xi)=\frac{1}{\delta^{\frac{n-2}{2}}}j_{b}\left(\frac{1}{\delta}\left(\psi_{q}^{\partial}\right)^{-1}(\xi)\right)\chi\left(\left(\psi_{q}^{\partial}\right)^{-1}(\xi)\right)
\]
and we decompose $H^{1}(M)$ in the direct sum of the following two
subspaces 
\begin{align*}
K_{\delta,q} & =\text{Span}\left\langle Z_{\delta,q}^{1},\dots,Z_{\delta,q}^{n}\right\rangle \\
K_{\delta,q}^{\bot} & =\left\{ \varphi\in H^{1}(M)\ :\ \left\langle \left\langle \varphi,Z_{\delta,q}^{b}\right\rangle \right\rangle _{H}=0,\ b=1,\dots,n\right\} 
\end{align*}
and we define the projections 
\begin{eqnarray*}
\Pi=H^{1}(M)\rightarrow K_{\delta,q} &  & \Pi^{\bot}=H^{1}(M)\rightarrow K_{\delta,q}^{\bot}.
\end{eqnarray*}

Given $q\in\partial M$ we also define in a similar way 
\[
V_{\delta,q}(\xi)=\frac{1}{\delta^{\frac{n-2}{2}}}v_{q}\left(\frac{1}{\delta}\left(\psi_{q}^{\partial}\right)^{-1}(\xi)\right)\chi\left(\left(\psi_{q}^{\partial}\right)^{-1}(\xi)\right),
\]
and 
\begin{equation}
\left(v_{q}\right)_{\delta}(y)=\frac{1}{\delta^{\frac{n-2}{2}}}v_{q}\left(\frac{y}{\delta}\right);\label{eq:vqdelta}
\end{equation}
here $v_{q}:\mathbb{R}_{+}^{n}\rightarrow\mathbb{R}$ is the unique
solution of the problem 
\begin{equation}
\left\{ \begin{array}{ccc}
-\Delta v=2h_{ij}(q)t\partial_{ij}^{2}U &  & \text{on }\mathbb{R}_{+}^{n};\\
\frac{\partial v}{\partial t}+nU^{\frac{2}{n-2}}v=0 &  & \text{on \ensuremath{\partial}}\mathbb{R}_{+}^{n}.
\end{array}\right.\label{eq:vqdef}
\end{equation}
such that $v_{q}$ is $L^{2}(\mathbb{R}_{+}^{n})$-ortogonal to $j_{b}$
for all $b=1,\dots,n$ Here $h_{ij}$ is the second fundamental form
and we use the Einstein convention of repeated indices. We remark
that 
\begin{equation}
|\nabla^{r}v_{q}(y)|\le C(1+|y|)^{3-r-n}\text{ for }r=0,1,2,\label{eq:gradvq}
\end{equation}
\begin{equation}
\int_{\partial\mathbb{R}_{+}^{n}}U^{\frac{n}{n-2}}v_{q}=0\label{eq:Uvq}
\end{equation}
and 
\begin{equation}
\int_{\partial\mathbb{R}_{+}^{n}}\Delta v_{q}v_{q}dzdt\le0.\label{new}
\end{equation}
In addition, the map $q\mapsto v_{q}$ is in $C^{2}(\partial M)$.
(see \cite[Proposition 5.1 and estimate (5.9)]{A3} and \cite{GMP18}
for the last claim).

The function $v_{q}$ is related to the first order expansion of the
metric tensor $g_{ij}$ (see eq. (\ref{eq:gij})) and provides a sharp
correction term of the bubble in order to give a good ansatz for a
solution. Indeed, recasting Problem (\ref{eq:P}) as $u=i^{*}\left(f_{\varepsilon}(u)\right),\ u\in\mathcal{H}$,
we look for solution of (\ref{eq:P*}) having the form 

\[
u=W_{\delta,q}+\delta V_{\delta,q}+\phi,\text{ with }\phi\in K_{\delta,q}^{\bot}\cap\mathcal{H}.
\]
or, in other terms, we want to solve the following couple of equation
\begin{align}
\Pi^{\bot}\left\{ W_{\delta,q}+\delta V_{\delta,q}+\phi-i^{*}\left(f_{\varepsilon}(W_{\delta,q}+\delta V_{\delta,q}+\phi)\right)\right\}  & =0;\label{eq:P-Kort}\\
\Pi\left\{ W_{\delta,q}+\delta V_{\delta,q}+\phi-i^{*}\left(f_{\varepsilon}(W_{\delta,q}+\delta V_{\delta,q}+\phi)\right)\right\}  & =0.\label{eq:P-K}
\end{align}
We rewrite (\ref{eq:P-Kort}) as

\[
L(\phi)=N(\phi)+R
\]
where $L:=L_{\delta,q}$ is the linear operator 
\begin{align}
L: & K_{\delta,q}^{\bot}\cap\mathcal{H}\rightarrow K_{\delta,q}^{\bot}\cap\mathcal{H}\nonumber \\
L(\phi):= & \Pi^{\bot}\left\{ \phi-i^{*}\left(f'_{\varepsilon}(W_{\delta,q}+\delta V_{\delta,q})[\phi]\right)\right\} \label{eq:defL}
\end{align}
and the nonlinear term $N(\phi)$ and the remainder term $R$ are
\begin{align}
N(\phi):= & \Pi^{\bot}\left\{ i^{*}\left(f(W_{\delta,q}+\delta V_{\delta,q}+\phi)-f(W_{\delta,q}+\delta V_{\delta,q})-f'(W_{\delta,q}+\delta V_{\delta,q})[\phi]\right)\right\} ;\label{eq:defN}\\
R:= & \Pi^{\bot}\left\{ i^{*}\left(f(W_{\delta,q}+\delta V_{\delta,q})\right)-W_{\delta,q}-\delta V_{\delta,q}\right\} .\label{eq:defR}
\end{align}
The rest of this section is devoted to show that for any choice of
$\delta,q$ a solution $\phi$ of the (\ref{eq:P-Kort}) exists. We
remark that the choice of $v_{q}$ is crucial to obtain a good estimate
on the size of the remainder term $R$, which allows us to prove the
main result of this section, Proposition \ref{prop:phi}.
\begin{lem}
\label{lem:L}Assume $n\ge7$ and let $\delta=\sqrt{\varepsilon}\lambda$
For $a,b\in\mathbb{R}$, $0<a<b$ there exists a positive constant
$C_{0}=C_{0}(a,b)$ such that, for $\varepsilon$ small, for any $q\in\partial M$,
for any $\lambda\in[a,b]$ and for any $\phi\in K_{\delta,q}^{\bot}\cap\mathcal{H}$
there holds
\[
\|L_{\delta,q}(\phi)\|_{\mathcal{H}}\ge C_{0}\|\phi\|_{\mathcal{H}}.
\]
\end{lem}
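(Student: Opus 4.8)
The plan is to argue by contradiction: suppose there exist sequences $\varepsilon_m\to 0$, $q_m\in\partial M$, $\lambda_m\in[a,b]$ and $\phi_m\in K_{\delta_m,q_m}^\bot\cap\mathcal H$ with $\|\phi_m\|_{\mathcal H}=1$ but $\|L_{\delta_m,q_m}(\phi_m)\|_{\mathcal H}\to 0$, where $\delta_m=\sqrt{\varepsilon_m}\lambda_m$. Writing $L(\phi_m)=\Pi^\bot\{\phi_m-i^*(f'_\varepsilon(W_{\delta_m,q_m}+\delta_m V_{\delta_m,q_m})[\phi_m])\}$, the hypothesis says that $\phi_m - i^*(f'_\varepsilon(W+\delta V)[\phi_m]) = \sum_b c_m^b Z^b_{\delta_m,q_m} + o(1)$ in $\mathcal H$ for suitable scalars $c_m^b$. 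First I would dispose of the $\mathcal H$ versus $H$ distinction: since $f'_\varepsilon(W+\delta V)[\phi]$ is supported near $q_m$ where $W$ concentrates, the extra $L^{s_\varepsilon}(\partial M)$ part of the $\mathcal H$-norm of $i^*(\cdot)$ can be controlled by the $H^1$-norm via the Nittka estimate (\ref{eq:nittka})–(\ref{eq:nittka1}) exactly as in the reformulation leading to (\ref{eq:P*}); so it suffices to get a contradiction in $H^1_g(M)$.

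Next I would rescale. Set $\tilde\phi_m(x)=\delta_m^{(n-2)/2}\phi_m(\psi^\partial_{q_m}(\delta_m x))\chi(\delta_m x)$, a function on $\mathbb R^n_+$; by the conformal invariance of the Dirichlet energy and the expansions (\ref{eq:|g|})–(\ref{eq:gin}) of the metric in Fermi coordinates, $\|\tilde\phi_m\|_{H^1(\mathbb R^n_+)}$ stays bounded, so up to a subsequence $\tilde\phi_m\rightharpoonup\tilde\phi$ weakly in $H^1(\mathbb R^n_+)$, strongly in $L^2_{loc}$ and in $L^{2(n-1)/(n-2)}_{loc}(\partial\mathbb R^n_+)$. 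Testing the equation $\phi_m - i^*(f'_\varepsilon(\cdots)[\phi_m]) = \sum_b c_m^b Z^b_{\delta_m,q_m}+o(1)$ against smooth compactly supported functions (rescaled), and using that $\delta_m V_{\delta_m,q_m}=O(\delta_m)$ is a harmless lower-order perturbation together with $\varepsilon_m\to 0$ so that the exponent $\tfrac{n}{n-2}+\varepsilon_m-1\to\tfrac{2}{n-2}$ and $f'_\varepsilon(W)\to nU^{2/(n-2)}$, one finds that $\tilde\phi$ is a finite-energy solution of the linearized problem (\ref{eq:linearizzato}). By the nondegeneracy result quoted before (\ref{eq:sol-linearizzato}), $\tilde\phi$ is a linear combination of $j_1,\dots,j_n$. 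But the orthogonality $\langle\langle\phi_m,Z^b_{\delta_m,q_m}\rangle\rangle_H=0$ passes to the limit — again after rescaling and using the metric expansions to show the bilinear form converges to the Euclidean $\langle\langle\cdot,j_b\rangle\rangle$ of the half-space linearization — forcing $\tilde\phi$ to be $L^2$-orthogonal to every $j_b$, hence $\tilde\phi\equiv 0$.

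Finally I would upgrade the weak-to-zero convergence to a contradiction with $\|\phi_m\|_{H}=1$. From $L(\phi_m)\to 0$ one has $\|\phi_m\|_H^2 = \langle\langle \phi_m, i^*(f'_\varepsilon(W+\delta V)[\phi_m])\rangle\rangle_H + o(1) = \int_{\partial M} f'_\varepsilon(W_{\delta_m,q_m}+\delta_m V_{\delta_m,q_m})[\phi_m]\,\phi_m\,d\sigma_g + o(1)$. Rescaling this last integral turns it into $\int_{\partial\mathbb R^n_+}\bigl(\tfrac{n}{n-2}+\varepsilon_m\bigr)(n-2)(\,W+\cdots)^{2/(n-2)+\varepsilon_m}\tilde\phi_m^2\,dz\,(1+o(1))$; the integrand is concentrated and dominated, uniformly in $m$, by an $L^1$ function of the form $C\,U^{2/(n-2)}\tilde\phi_m^2$ plus a tail that is $o(1)$ by the decay of $U$ and $v_q$ and the boundedness of $\|\tilde\phi_m\|_{L^{2(n-1)/(n-2)}}$. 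Since $\tilde\phi_m\to 0$ in $L^2_{loc}(\partial\mathbb R^n_+)$ and the mass escaping to infinity is negligible, this integral tends to $0$, whence $\|\phi_m\|_H\to 0$, contradicting $\|\phi_m\|_H=1$.

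The main obstacle is the limiting step itself: making rigorous that, after the Fermi-coordinate rescaling, (i) the operator $i^*$ composed with the linearized nonlinearity converges — in the right topology and uniformly in $q_m$ — to the half-space operator whose kernel is $\mathrm{Span}\langle j_1,\dots,j_n\rangle$, and (ii) the correction term $\delta_m V_{\delta_m,q_m}$ and the supercritical shift $\varepsilon_m$ really are negligible in these estimates. Both require careful use of the pointwise bounds (\ref{eq:gradvq}), the metric expansions (\ref{eq:|g|})–(\ref{eq:gin}), and the Nittka regularity (\ref{eq:nittka}); the non-umbilicity and the dimensional restriction $n\ge 7$ do not enter this lemma directly but guarantee the later estimates on $R$ and on the reduced functional.
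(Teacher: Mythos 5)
The paper does not prove Lemma~\ref{lem:L}: it simply states that the proof is ``very similar to the proof of \cite[Lemma 2]{GMP16}'' and omits it. Your blow-up contradiction argument — rescaling in Fermi coordinates, passing to the linearized half-space problem (\ref{eq:linearizzato}), invoking the nondegeneracy characterization (\ref{eq:sol-linearizzato}), using the orthogonality to $K_{\delta,q}$ to kill the limit, and upgrading weak to strong convergence via the Nittka estimate — is precisely the standard argument underlying that reference, adapted to include the $\delta V_{\delta,q}$ correction, the supercritical shift $\varepsilon$, and the $L^{s_\varepsilon}$ part of the $\mathcal{H}$-norm, so it is essentially the same approach.
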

\begin{proof}
The proof of this Lemma is very similar to the proof of \cite[Lemma 2]{GMP16}
and will be omitted.
\end{proof}
\begin{lem}
\label{lem:R}Assume $n\ge7$ and let $\delta=\sqrt{\varepsilon}\lambda$
For $a,b\in\mathbb{R}$, $0<a<b$ there exists a positive constant
$C_{1}=C_{1}(a,b)$ such that, for $\varepsilon$ small, for any $q\in\partial M$
and for any $\lambda\in[a,b]$ there holds
\[
\|R\|_{\mathcal{H}}\le C_{1}\varepsilon\left|\ln\varepsilon\right|
\]
\end{lem}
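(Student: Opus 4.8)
The plan is to estimate $\|R\|_{\mathcal H}$ by exploiting the bound \eqref{eq:nittka} from Nittka's theorem: since $R=\Pi^{\bot}\bigl\{i^{*}(f_\varepsilon(W_{\delta,q}+\delta V_{\delta,q}))-W_{\delta,q}-\delta V_{\delta,q}\bigr\}$ and $\Pi^{\bot}$ is a bounded projection, it suffices to control $\|i^{*}(f_\varepsilon(W_{\delta,q}+\delta V_{\delta,q}))-W_{\delta,q}-\delta V_{\delta,q}\|_{\mathcal H}$. First I would note that by definition of $i^{*}$ and the equations \eqref{eq:Udelta}, \eqref{eq:vqdef} satisfied by $U_\delta$ and $(v_q)_\delta$, the function $W_{\delta,q}+\delta V_{\delta,q}$ is an approximate solution: applying $-\Delta_g+a$ to it and reading off the Neumann data, the difference $i^{*}(f_\varepsilon(W_{\delta,q}+\delta V_{\delta,q}))-W_{\delta,q}-\delta V_{\delta,q}$ is the $i^{*}$-image of an error term supported near $q$. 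Concretely, writing everything in Fermi coordinates $y=\delta x$ and using the metric expansions \eqref{eq:|g|}--\eqref{eq:gij}, the error splits into: (i) a bulk contribution coming from $(-\Delta_g+a)(W_{\delta,q}+\delta V_{\delta,q})$, which is nonzero only because the Euclidean Laplacian is replaced by $\Delta_g$ and because of the cut-off $\chi$; and (ii) a boundary contribution $f_\varepsilon(W_{\delta,q}+\delta V_{\delta,q})-[\text{flat Neumann data of }W_{\delta,q}+\delta V_{\delta,q}]$ on $\partial M$.

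Next I would estimate each piece in the norm dictated by \eqref{eq:nittka}, i.e.\ the $L^{q+r}(M)$ norm of the bulk term and the $L^{\frac{(n-1)q}{n-q}+r}(\partial M)$ norm of the boundary term, with $q,r$ chosen as in \eqref{eq:nittka1} so that the output lands in $L^{s_\varepsilon}(\partial M)$ and in $H^1$. For the boundary term the main point is a Taylor expansion in $\varepsilon$: $(u^{+})^{\frac{n}{n-2}+\varepsilon}=(u^{+})^{\frac{n}{n-2}}\bigl(1+\varepsilon\ln u^{+}+O(\varepsilon^2(\ln u^{+})^2)\bigr)$, and since $u=W_{\delta,q}+\delta V_{\delta,q}\sim\delta^{-\frac{n-2}{2}}U(x)$ we get a factor $\ln u\sim -\tfrac{n-2}{2}\ln\delta+\ln U(x)=-\tfrac{n-2}{4}\ln\varepsilon+O(1)+\ln U(x)$, which after integration against the rapidly decaying $U^{\frac{n}{n-2}}$ produces the $\varepsilon|\ln\varepsilon|$ size. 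The remaining flat-vs-curved discrepancies — the expansion of $g^{ij}$ contributing $h_{ij}(0)t$ at first order (cancelled by the choice of $v_q$, which is exactly why $v_q$ was introduced), the second-order curvature terms, the terms generated by $\chi$ on the annulus $R\le|y|\le 2R$ — all contribute at worst $O(\delta^2|\ln\delta|)=O(\varepsilon|\ln\varepsilon|)$ or are exponentially small in $1/\delta$, using the decay \eqref{eq:gradvq} and the explicit decay of $U$ and its derivatives together with the identities \eqref{eq:Iam} to evaluate the resulting radial integrals. Adding $\delta=\sqrt\varepsilon\lambda$ with $\lambda$ in the compact interval $[a,b]$ keeps all constants uniform in $q$ and $\lambda$.

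The step I expect to be the main obstacle is the careful bookkeeping of the boundary term near the critical integrability threshold: because $s_\varepsilon$ and the dual exponent in \eqref{eq:nittka1} both depend on $\varepsilon$, one must verify that the constants in the trace/Sobolev inequalities and in Remark \ref{rem:Nit} do not blow up as $\varepsilon\to0^{+}$, and that the logarithmic factor from the Taylor expansion of the power nonlinearity is genuinely only $|\ln\varepsilon|$ and not worse — in particular that integrals like $\int U^{\frac{n}{n-2}}|\ln U|$ over $\partial\mathbb R^n_+$ converge (which they do, since $U$ decays polynomially). A secondary subtlety is that the first-order term $2h_{ij}(q)t\,\partial^2_{ij}U$ in the bulk is \emph{not} small pointwise — it is exactly of the same order as the leading part of $-\Delta_g W_{\delta,q}$ — so one genuinely needs \eqref{eq:vqdef} to cancel it before estimating, and then only the \emph{second}-order metric terms plus the $L^{s_\varepsilon}$-vs-$L^{2(n-1)/(n-2)}$ discrepancy of the nonlinearity survive; tracking that cancellation cleanly through the $i^{*}$ map is where most of the work lies. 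Once these estimates are assembled, summing the bulk and boundary contributions and invoking \eqref{eq:nittka} gives $\|R\|_{\mathcal H}\le C_1\varepsilon|\ln\varepsilon|$ with $C_1=C_1(a,b)$, completing the proof.
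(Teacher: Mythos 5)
Your proposal follows essentially the same route as the paper's proof: Taylor-expanding the boundary nonlinearity in $\varepsilon$ to produce the $\varepsilon|\ln\varepsilon|$ factor, using equation \eqref{eq:vqdef} for $v_q$ to cancel the leading $2h_{ij}(q)\,t\,\partial^2_{ij}U$ term in $\Delta_g W_{\delta,q}$ so that the geometric residual is $O(\delta^2)$, and invoking Remark~\ref{rem:Nit} with the exponents from \eqref{eq:nittka1} to reach $L^{s_\varepsilon}(\partial M)$. One correction worth noting: the $H^1$ part of $\|R\|_{\mathcal H}$ is \emph{not} controlled by Nittka's theorem (that result gives $L^p$ bounds on $M$ and $\partial M$, not $H^1$); the paper gets $\|\cdot\|_H$ from a standard duality/energy estimate, testing the boundary value problem solved by $\Gamma-W_{\delta,q}-\delta V_{\delta,q}$ against itself and bounding the residuals in $L^{2n/(n+2)}(M)$ and $L^{2(n-1)/n}(\partial M)$, while Nittka is reserved for the $L^{s_\varepsilon}(\partial M)$ part of the norm. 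The paper also first splits $R$ into $i^*(f_\varepsilon)-i^*(f_0)$ (which carries the $\varepsilon|\ln\varepsilon|$) plus $i^*(f_0)-W_{\delta,q}-\delta V_{\delta,q}$ (which is $O(\delta^2)=O(\varepsilon)$), rather than handling the full BVP residual in one pass as you describe, but this is a bookkeeping difference rather than a different method.
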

\begin{proof}
We estimate firstly $\|R\|_{H}$. We have 
\begin{align*}
\left\Vert R\right\Vert _{H} & \le\left\Vert i^{*}\left(f_{\varepsilon}(W_{\delta,q}+\delta V_{\delta,q})\right)-i^{*}\left(f_{0}(W_{\delta,q}+\delta V_{\delta,q})\right)\right\Vert _{H}\\
 & +\left\Vert i^{*}\left(f_{0}(W_{\delta,q}+\delta V_{\delta,q})\right)-W_{\delta,q}-\delta V_{\delta,q}\right\Vert _{H}.
\end{align*}
We start by estimating the second term. By definiton of $i^{*}$ there
exists $\Gamma=i^{*}\left(f_{0}(W_{\delta,q}+\delta V_{\delta,q})\right)$,
that is a function $\Gamma$ solving
\begin{equation}
\left\{ \begin{array}{ll}
-\Delta_{g}\Gamma+a(x)\Gamma=0 & \text{ on }M\\
\frac{\partial}{\partial\nu}\Gamma=f_{0}(W_{\delta,q}+\delta V_{\delta,q}) & \text{ on }\partial M
\end{array}\right..\label{eq:gamma}
\end{equation}
So we have
\begin{multline*}
\left\Vert i^{*}\left(f_{0}(W_{\delta,q}+\delta V_{\delta,q}\right)-W_{\delta,q}-\delta V_{\delta,q}\right\Vert _{H}^{2}=\|\Gamma-W_{\delta,q}-\delta V_{\delta,q}\|_{H}^{2}\\
=\int_{M}\left[-\Delta_{g}(\Gamma-W_{\delta,q}-\delta V_{\delta,q})+a(\Gamma-W_{\delta,q}-\delta V_{\delta,q})\right](\Gamma-W_{\delta,q}-\delta V_{\delta,q})d\mu_{g}\\
+\int_{\partial M}\left[\frac{\partial}{\partial\nu}(\Gamma-W_{\delta,q}-\delta V_{\delta,q})\right](\Gamma-W_{\delta,q}-\delta V_{\delta,q})d\sigma\\
=\int_{M}\left[\Delta_{g}(W_{\delta,q}+\delta V_{\delta,q})-a(W_{\delta,q}+\delta V_{\delta,q})\right](\Gamma-W_{\delta,q}-\delta V_{\delta,q})d\mu_{g}\\
+\int_{\partial M}\left[f_{0}(W_{\delta,q}+\delta V_{\delta,q})-\frac{\partial}{\partial\nu}(W_{\delta,q}+\delta V_{\delta,q})\right](\Gamma-W_{\delta,q}-\delta V_{\delta,q})d\sigma\\
=:I_{1}+I_{2}.
\end{multline*}
We have

\begin{align*}
I_{1} & \le C\left|\Delta_{g}(W_{\delta,q}+\delta V_{\delta,q})-a(W_{\delta,q}+\delta V_{\delta,q})\right|_{L^{\frac{2n}{n+2}}(M)}\left\Vert \Gamma-W_{\delta,q}-\delta V_{\delta,q}\right\Vert _{H}.
\end{align*}
By direct computation we have immediately that $\left|(W_{\delta,q}+\delta V_{\delta,q})\right|_{L^{\frac{2n}{n+2}}}=O(\delta^{2})$.
Then we proceed as in \cite[eq. (35)]{GMP18}. Recalling that in local
charts the Laplace Beltrami operator is 
\begin{eqnarray*}
\Delta_{g}W_{\delta,q} & = & \Delta_{\text{euc}}\left(U_{\delta}(u)\chi(y)\right)+[g^{ij}(y)-\delta_{ij}]\partial_{ij}^{2}\left(U_{\delta}(u)\chi(y)\right)\\
 &  & -g^{ij}(y)\Gamma_{ij}^{k}(y)\partial_{k}\left(U_{\delta}(u)\chi(y)\right)
\end{eqnarray*}
and noticing that by (\ref{eq:|g|}) and (\ref{eq:gij}) for the Christoffel
symbols holds $\Gamma_{ij}^{k}(y)=O(|y|)$, we have, in variables
$y=\delta x$, 
\begin{eqnarray}
\Delta_{g}W_{\delta,q} & = & U_{\delta}(u)\Delta_{\text{euc}}\left(\chi(y)\right)+2\nabla U_{\delta}(u)\nabla\chi(y)\nonumber \\
 &  & +[g^{ij}(y)-\delta_{ij}]\partial_{ij}^{2}\left(U_{\delta}(u)\chi(y)\right)-g^{ij}(y)\Gamma_{ij}^{k}(y)\partial_{k}\left(U_{\delta}(u)\chi(y)\right)\nonumber \\
 & = & \frac{1}{\delta^{\frac{n-2}{2}}}\left(2h_{ij}(0)\delta x_{n}\frac{1}{\delta^{2}}\partial_{ij}U(x)+g^{ij}(x)\Gamma_{ij}^{k}(x)\frac{1}{\delta}\partial_{k}U+o(\delta)c(x)\right)\nonumber \\
 & = & \frac{1}{\delta^{\frac{n}{2}}}\left(2h_{ij}(0)x_{n}\partial_{ij}^{2}U(x)+O(\delta)c(x)\right)\label{eq:R1}
\end{eqnarray}
where, with abuse of notation, we call $c(x)$ function with$\left|\int_{\mathbb{R}^{n}}c(x)dx\right|\le C$.
In a similar way, by (\ref{eq:vqdef}) and by (\ref{eq:gij}) we have
\begin{equation}
\delta\Delta_{g}V_{\delta,q}=\frac{1}{\delta^{\frac{n}{2}}}\left(-2h_{ij}(0)x_{n}\partial_{ij}^{2}U(y)+O(\delta)c(y)\right)\label{eq:R2}
\end{equation}
Thus, in local chart by (\ref{eq:R1}) and (\ref{eq:R2}) we get 
\begin{equation}
|\Delta_{g}(W_{\delta,q}+\delta V_{\delta,q})|_{L^{\frac{2n}{n+2}}(M)}=\delta^{n\frac{n+2}{2n}}\frac{1}{\delta^{\frac{n}{2}}}O(\delta)=O(\delta^{2})\label{eq:deltaw+v}
\end{equation}
and we conclude that 
\begin{equation}
I_{1}=O(\delta^{2})\left\Vert \Gamma-W_{\delta,q}\right\Vert _{H}.\label{eq:I1}
\end{equation}
Notice that the estimate (\ref{eq:deltaw+v}) is possible since we
carefully choose the function $v_{q}$ as a solution of (\ref{eq:vqdef}).

For the second integral $I_{2}$ we proceed in a similar way, getting
\begin{align}
I_{2} & \le C\left|f_{0}(W_{\delta,q}+\delta V_{\delta,q})-\frac{\partial}{\partial\nu}(W_{\delta,q}-\delta V_{\delta,q})\right|_{L^{\frac{2(n-1)}{n}}(\partial M)}\left\Vert \Gamma-W_{\delta,q}-\delta V_{\delta,q}\right\Vert _{H}\label{eq:I2}
\end{align}
and again, arguing similarly to \cite[Lemma 9]{GMP18}, we have, since
$U$ is a solution of (\ref{eq:Udelta}),
\begin{align*}
\int_{\partial M}\left[(n-2)W_{\delta,q}^{\frac{n}{n-2}}-\frac{\partial}{\partial\nu}W_{\delta,q}\right]R & \le\left|(n-2)W_{\delta,q}^{\frac{n}{n-2}}-\frac{\partial}{\partial\nu}W_{\delta,q}\right|_{L^{\frac{2(n-1)}{n}}(\partial M)}\|R\|_{H}\\
 & =O(\delta^{2})\|R\|_{H}.
\end{align*}
Now we estimate 
\begin{multline*}
\int_{\partial M}\left\{ (n-2)\left[\left((W_{\delta,q}+\delta V_{\delta,q})^{+}\right)^{\frac{n}{n-2}}-W_{\delta,q}^{\frac{n}{n-2}}\right]-\delta\frac{\partial V_{\delta,q}}{\partial\nu}\right\} Rd\sigma\\
\le c\left|(n-2)\left[\left((W_{\delta,q}+\delta V_{\delta,q})^{+}\right)^{\frac{n}{n-2}}-W_{\delta,q}^{\frac{n}{n-2}}\right]-\delta\frac{\partial V_{\delta,q}}{\partial\nu}\right|_{L^{\frac{2(n-1)}{n}}(\partial M)}\|R\|_{H}
\end{multline*}
and, by Taylor expansion and by definition of the function $v_{q}$
(see (\ref{eq:vqdef}) ) 
\begin{multline*}
\left|(n-2)\left[\left((W_{\delta,q}+\delta V_{\delta,q})^{+}\right)^{\frac{n}{n-2}}-W_{\delta,q}^{\frac{n}{n-2}}\right]-\delta\frac{\partial V_{\delta,q}}{\partial\nu}\right|_{L^{\frac{2(n-1)}{n}}(\partial M)}\\
\le\left(\int_{|z|<\frac{1}{\delta}}\left((n-2)\left[\left((U+\delta v_{q})^{+}\right)^{\frac{n}{n-2}}-U^{\frac{n}{n-2}}\right]+\delta\frac{\partial v_{q}}{\partial t}\right)^{\frac{2(n-1)}{n}}\right)^{\frac{n}{2(n-1)}}+o(\delta^{2})\\
\le\delta\left(\int_{|z|<\frac{1}{\delta}}\left(n\left((U+\theta\delta v_{q})^{+}\right)^{\frac{2}{n-2}}v_{q}-U^{\frac{2}{n-2}}v_{q}\right)^{\frac{2(n-1)}{n}}\right)^{\frac{n}{2(n-1)}}+o(\delta^{2})
\end{multline*}
Notice that, for $\delta$ small enough, $U+\theta\delta v_{q}>0$
if $|y|\le1/\delta$ by the decay estimates (\ref{eq:gradvq}). At
this point, using again Taylor expansion, we have 
\begin{multline*}
\int_{|z|<\frac{1}{\delta}}\left[\left|\left((U+\theta\delta v_{q})^{+}\right)^{\frac{2}{n-2}}-U^{\frac{2}{n-2}}\right||v_{q}|\right]^{\frac{2(n-1)}{n}}\\
=\delta^{\frac{2(n-1)}{n}}\int_{|z|<\frac{1}{\delta}}\left(U+\theta_{1}\delta v_{q}\right)^{\frac{-2(n-1)(n-4)}{n(n-2)}}|v_{q}|^{\frac{4(n-1)}{n}}dz
\end{multline*}
where the last integral is bounded since $n\ge7$. 

Thus $\left|(n-2)\left[\left((W_{\delta,q}+\delta V_{\delta,q})^{+}\right)^{\frac{n}{n-2}}-W_{\delta,q}^{\frac{n}{n-2}}\right]-\delta\frac{\partial V_{\delta,q}}{\partial\nu}\right|_{L^{\frac{2(n-1)}{n}}(\partial M)}=O(\delta^{2})$,
so $I_{2}=O(\delta^{2})\left\Vert \Gamma-W_{\delta,q}\right\Vert _{H}$
and, consequentely,
\[
\left\Vert i^{*}\left(f_{0}(W_{\delta,q}+\delta V_{\delta,q}\right)-W_{\delta,q}-\delta V_{\delta,q}\right\Vert _{H}=O(\delta^{2}).
\]
To conclude the first part of the proof we estimate the term 
\[
\left\Vert i^{*}\left(f_{\varepsilon}(W_{\delta,q}+\delta V_{\delta,q})\right)-i^{*}\left(f_{0}(W_{\delta,q}+\delta V_{\delta,q})\right)\right\Vert _{H}.
\]
It is useful to recall the following Taylor expansions with respect
to $\varepsilon$
\begin{align}
U^{\varepsilon} & =1+\varepsilon\ln U+\frac{1}{2}\varepsilon^{2}\ln^{2}U+o(\varepsilon^{2})\label{eq:Uallaeps}\\
\delta^{-\varepsilon\frac{n-2}{2}} & =1-\varepsilon\frac{n-2}{2}\ln\delta+\varepsilon^{2}\frac{(n-2)^{2}}{8}\ln^{2}\delta+o(\varepsilon^{2}\ln^{2}\delta)\label{eq:deltaallaeps}
\end{align}
We have that 
\begin{multline}
\left\Vert i^{*}\left(f_{\varepsilon}(W_{\delta,q}+\delta V_{\delta,q})\right)-i^{*}\left(f_{0}(W_{\delta,q}+\delta V_{\delta,q})\right)\right\Vert _{H}\\
\le\left|\left(W_{\delta,q}+\delta V_{\delta,q}\right)^{\frac{n}{n-2}+\varepsilon}-\left(W_{\delta,q}+\delta V_{\delta,q}\right)^{\frac{n}{n-2}}\right|_{L^{\frac{2(n-1)}{n}}(\partial M)}\\
=\left\{ \int_{|z|<\frac{1}{\delta}}\left[\left(\frac{1}{\delta^{\varepsilon\frac{n-2}{2}}}(U+\delta v_{q})^{\varepsilon}-1\right)(U+\delta v_{q})^{\frac{n}{n-2}}\right]^{\frac{2(n-1)}{n}}dz\right\} ^{\frac{n}{2(n-1)}}+O(\delta^{2})\\
\le\left\{ \int_{|z|<\frac{1}{\delta}}\left|\left(-\frac{n-2}{2}\varepsilon\ln\delta+\varepsilon\ln(U+\delta v_{q})+O(\varepsilon^{2}\ln\delta)\right)U^{\frac{n}{n-2}}\right|^{\frac{2(n-1)}{n}}dz\right\} ^{\frac{n}{2(n-1)}}+O(\delta^{2})\\
=O(\varepsilon|\ln\delta|)+O(\varepsilon)+O(\delta^{2})\label{eq:feps-f0}
\end{multline}
Remembering that $\delta=\lambda\sqrt{\varepsilon}$ we get the required
estimate in $H$-norm.

To conclude the proof, we have to control $|R|_{L^{s_{\varepsilon}}(\partial M)}$.
As in the previous case we consider 
\begin{align*}
|R|_{L^{s_{\varepsilon}}(\partial M)}\le & \left|i^{*}\left(f_{\varepsilon}(W_{\delta,q}+\delta V_{\delta,q})\right)-i^{*}\left(f_{0}(W_{\delta,q}+\delta V_{\delta,q})\right)\right|_{L^{s_{\varepsilon}}(\partial M)}\\
 & +\left|i^{*}\left(f_{0}(W_{\delta,q}+\delta V_{\delta,q})\right)-W_{\delta,q}-\delta V_{\delta,q}\right|_{L^{s_{\varepsilon}}(\partial M)}
\end{align*}
and we start estimating the second term. Taken $\Gamma=i^{*}\left(f_{0}(W_{\delta,q}+\delta V_{\delta,q})\right)$
the solution of (\ref{eq:gamma}), we have that the function $\Gamma-W_{\delta,q}-\delta V_{\delta,q}$
solves the problem
\[
\left\{ \begin{array}{ll}
-\Delta_{g}(\Gamma-W_{\delta,q}-\delta V_{\delta,q})+a(x)(\Gamma-W_{\delta,q}-\delta V_{\delta,q})\\
=-\Delta_{g}(W_{\delta,q}+\delta V_{\delta,q})+a(x)(W_{\delta,q}+\delta V_{\delta,q}) & \text{ on }M\\
\\
\frac{\partial}{\partial\nu}(\Gamma-W_{\delta,q}-\delta V_{\delta,q})=f_{0}(W_{\delta,q}+\delta V_{\delta,q})-\frac{\partial}{\partial\nu}(W_{\delta,q}+\delta V_{\delta,q}) & \text{ on }\partial M
\end{array}\right..
\]
We choose $q=\frac{2n+n^{2}\left(\frac{n-2}{n-1}\right)\varepsilon}{n+2+2n\left(\frac{n-2}{n-1}\right)\varepsilon}$
and $r=\varepsilon$, so, by Remark \ref{rem:Nit}, we get 
\begin{align*}
|\Gamma-W_{\delta,q}-\delta V_{\delta,q}|_{L^{s_{\varepsilon}}(\partial M)}\le & |-\Delta_{g}(W_{\delta,q}+\delta V_{\delta,q})+a(x)(W_{\delta,q}+\delta V_{\delta,q})|_{L^{q+\varepsilon}(M)}\\
 & +\left|f_{0}(W_{\delta,q}+\delta V_{\delta,q})-\frac{\partial}{\partial\nu}(W_{\delta,q}+\delta V_{\delta,q})\right|_{L^{\frac{(n-1)q}{n-q}+\varepsilon}(\partial M)}.
\end{align*}
We remark that with our choice we can write $q=\frac{2n}{n+2}+O^{+}(\varepsilon)$,
$\frac{1}{q+\varepsilon}=\frac{n+2}{2n}-O^{+}(\varepsilon)$ and $\frac{(n-1)q}{n-q}+\varepsilon=\frac{2(n-1)}{n}+O^{+}(\varepsilon)$
where $0<O^{+}(\varepsilon)<C\varepsilon$ for some positive constant
$C$. 

Easily we get
\[
|a(x)(W_{\delta,q}+\delta V_{\delta,q})|_{L^{q+\varepsilon}(M)}=O\left(\delta^{2-O^{+}(\varepsilon)}\right).
\]

Moreover, proceeding as in the first part of the proof we get
\begin{align*}
|-\Delta_{g}(W_{\delta,q}+\delta V_{\delta,q})|_{L^{q+\varepsilon}(M)} & =O\left(\delta^{2-O^{+}(\varepsilon)}\right);\\
\left|f_{0}(W_{\delta,q}+\delta V_{\delta,q})-\frac{\partial}{\partial\nu}(W_{\delta,q}+\delta V_{\delta,q})\right|_{L^{\frac{(n-1)q}{n-q}+\varepsilon}(\partial M)} & =O\left(\delta^{2-O^{+}(\varepsilon)}\right).
\end{align*}
To complete the proof we estimate 
\[
\left|i^{*}\left(f_{\varepsilon}(W_{\delta,q}+\delta V_{\delta,q})\right)-i^{*}\left(f_{0}(W_{\delta,q}+\delta V_{\delta,q})\right)\right|_{L^{s_{\varepsilon}}(\partial M)}.
\]
Again we have
\begin{multline*}
\left|i^{*}\left(f_{\varepsilon}(W_{\delta,q}+\delta V_{\delta,q})\right)-i^{*}\left(f_{0}(W_{\delta,q}+\delta V_{\delta,q})\right)\right|_{L^{s_{\varepsilon}}(\partial M)}\\
\le\left|f_{\varepsilon}(W_{\delta,q}+\delta V_{\delta,q})-f_{0}(W_{\delta,q}+\delta V_{\delta,q})\right|_{L^{\frac{2(n-1)}{n}+O^{+}(\varepsilon)}(\partial M)}\\
\le\delta^{-O^{+}(\varepsilon)}\left\{ \int_{|z|<\frac{1}{\delta}}\left[\left(\frac{1}{\delta^{\varepsilon\frac{n-2}{2}}}(U+\delta v_{q})^{\varepsilon}-1\right)U^{\frac{n}{n-2}}(z,0)\right]^{\frac{2(n-1)}{n}+O^{+}(\varepsilon)}dz\right\} ^{\frac{1}{\frac{n}{2(n-1)}+O^{+}(\varepsilon)}}\\
+O(\delta^{2-O^{+}(\varepsilon)})\\
=\delta^{-O^{+}(\varepsilon)}\left\{ O(\varepsilon\left|\ln\delta\right|)+O(\varepsilon)+O(\delta^{2})\right\} .
\end{multline*}
and, since $\delta=\lambda\sqrt{\varepsilon}$, we have $\delta^{-O^{+}(\varepsilon)}=O(1)$
and we complete the proof.
\end{proof}
\begin{prop}
\label{prop:phi}Assume $n\ge7$ and let $\delta=\lambda\sqrt{\varepsilon}$
For $a,b\in\mathbb{R}$, $0<a<b$ there exists a positive constant
$C=C(a,b)$ such that, for $\varepsilon$ small, for any $q\in\partial M$
and for any $\lambda\in[a,b]$ there exists a unique $\phi_{\delta,q}$
which solves (\ref{eq:P-Kort}) with
\[
\|\phi_{\delta,q}\|_{\mathcal{H}}\le C\varepsilon\left|\ln\varepsilon\right|.
\]
Moreover the map $q\mapsto\phi_{\delta,q}$ is a $C^{1}(\partial M,\mathcal{H})$
map.
\end{prop}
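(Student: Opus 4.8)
The plan is to solve equation \eqref{eq:P-Kort}, rewritten as $L(\phi)=N(\phi)+R$, by a contraction mapping argument in the ball $B=\{\phi\in K_{\delta,q}^{\bot}\cap\mathcal{H}\ :\ \|\phi\|_{\mathcal{H}}\le M\varepsilon|\ln\varepsilon|\}$ for a suitably large constant $M=M(a,b)$. By Lemma \ref{lem:L}, $L_{\delta,q}$ is invertible on $K_{\delta,q}^{\bot}\cap\mathcal{H}$ with $\|L_{\delta,q}^{-1}\|\le C_0^{-1}$ uniformly for $\lambda\in[a,b]$ and $\varepsilon$ small, so \eqref{eq:P-Kort} is equivalent to the fixed point problem $\phi=T(\phi):=L_{\delta,q}^{-1}\big(N(\phi)+R\big)$. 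The strategy is the standard one: show $T$ maps $B$ into itself and is a contraction there; the unique fixed point is then the desired $\phi_{\delta,q}$, and its norm bound $\|\phi_{\delta,q}\|_{\mathcal{H}}\le C\varepsilon|\ln\varepsilon|$ is inherited from the radius of $B$.

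For the self-map property, I would combine Lemma \ref{lem:R}, which gives $\|R\|_{\mathcal{H}}\le C_1\varepsilon|\ln\varepsilon|$, with an estimate on the nonlinear term of the form $\|N(\phi)\|_{\mathcal{H}}\le C\|\phi\|_{\mathcal{H}}^{\min\{2,\,n/(n-2)+\varepsilon\}}$ for $\phi\in B$ (and, more generally, $\|N(\phi_1)-N(\phi_2)\|_{\mathcal{H}}\le C\big(\|\phi_1\|_{\mathcal{H}}+\|\phi_2\|_{\mathcal{H}}\big)^{\sigma}\|\phi_1-\phi_2\|_{\mathcal{H}}$ for some $\sigma>0$, which also gives the contraction). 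This is obtained from the definition \eqref{eq:defN}: using the boundedness of $i^*$ as a map $L^{\frac{2(n-1)}{n}+O^+(\varepsilon)}(\partial M)\to\mathcal{H}$ (via Remark \ref{rem:Nit}, exactly as in the proof of Lemma \ref{lem:R}), one reduces to a pointwise estimate on $f_\varepsilon(w+\phi)-f_\varepsilon(w)-f'_\varepsilon(w)[\phi]$ with $w=W_{\delta,q}+\delta V_{\delta,q}$, then applies the elementary convexity inequalities for the map $s\mapsto (s^+)^{n/(n-2)+\varepsilon}$ together with Hölder's inequality on $\partial M$. Since $n\ge7$ forces $\tfrac{n}{n-2}+\varepsilon<2$, the nonlinearity is superlinear with exponent $>1$, so for $\phi\in B$ one has $\|N(\phi)\|_{\mathcal{H}}=o(\varepsilon|\ln\varepsilon|)$; choosing $M$ large enough that $C_0^{-1}(C_1+o(1))M\le M$ makes $T(B)\subseteq B$, and the analogous Lipschitz estimate with small constant gives the contraction. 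Then the Banach fixed point theorem yields the unique $\phi_{\delta,q}\in B$.

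For the $C^1$ dependence $q\mapsto\phi_{\delta,q}$, I would apply the implicit function theorem to the map $G(q,\phi):=\Pi^{\bot}_{\delta,q}\{W_{\delta,q}+\delta V_{\delta,q}+\phi-i^*(f_\varepsilon(W_{\delta,q}+\delta V_{\delta,q}+\phi))\}$, regarded as a function on a neighborhood of $(q,\phi_{\delta,q})$. The partial derivative $\partial_\phi G(q,\phi_{\delta,q})$ differs from $L_{\delta,q}$ only by terms controlled by $\|\phi_{\delta,q}\|_{\mathcal{H}}=O(\varepsilon|\ln\varepsilon|)$, hence is invertible for $\varepsilon$ small by Lemma \ref{lem:L} and a perturbation argument. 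The $C^1$ regularity of $G$ in $q$ rests on the $C^1$-dependence on $q$ of the building blocks $W_{\delta,q}$, $V_{\delta,q}$ (equivalently of the Fermi charts $\psi_q^\partial$), of the projections $\Pi^{\bot}_{\delta,q}$, of the cut-off, and of the map $q\mapsto v_q$ — the last being exactly the $C^2(\partial M)$ statement recorded after \eqref{new}. The main technical obstacle is this differentiability bookkeeping: one must differentiate under the $i^*$ (which is linear and bounded, so harmless) and, more delicately, handle the $q$-derivatives of $\Pi^{\bot}_{\delta,q}$ and of the composition with the Fermi coordinates in the weighted $\mathcal{H}$-norm, keeping all estimates uniform in $\lambda\in[a,b]$; since these are routine but lengthy, I would carry them out as in the analogous references \cite{GMP16,GMP18} and omit the details where the argument is parallel.
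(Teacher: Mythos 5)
Your proposal is correct and follows essentially the same route as the paper: Lemma \ref{lem:L} for invertibility of $L_{\delta,q}$, Lemma \ref{lem:R} for $\|R\|_{\mathcal{H}}=O(\varepsilon|\ln\varepsilon|)$, a Lipschitz-with-small-constant estimate on $N$ (the paper phrases it via the mean value theorem applied to $f'_\varepsilon$ and Hölder, yielding $\|N(\phi_1)-N(\phi_2)\|_{\mathcal H}\le\gamma\|\phi_1-\phi_2\|_{\mathcal H}$ with $\gamma<1$ for $\|\phi_i\|_{\mathcal H}$ small, rather than your explicit power-law bound, but the content is the same), the Banach fixed point theorem on the ball of radius $C\varepsilon|\ln\varepsilon|$, and the Implicit Function Theorem for the $C^1$ dependence on $q$. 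One small nitpick: the inequality $\tfrac{n}{n-2}+\varepsilon<2$ is not a consequence of $n\ge7$ specifically (it already holds for $n\ge5$); the restriction $n\ge7$ enters elsewhere (the decay of $v_q$ and the estimates in Lemma \ref{lem:R}), not at this step.
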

\begin{proof}
First we prove that the nonlinear operator $N$ defined (\ref{eq:defN})
is a contraction on a suitable ball of $\mathcal{H}$. Recalling that
\[
\|N(\phi_{1})-N(\phi_{2})\|_{\mathcal{H}}=\|N(\phi_{1})-N(\phi_{2})\|_{H}+|N(\phi_{1})-N(\phi_{2})|_{L^{s_{\varepsilon}}(\partial M)}
\]
we estimate the two right hand side terms separately.

By the continuity of $i^{*}:L^{\frac{2(n-1)}{n}}(\partial M)\rightarrow H$,
and by Lagrange theorem we have
\begin{multline*}
\|N(\phi_{1})-N(\phi_{2})\|_{H}\\
\le\left\Vert \left(f'_{\varepsilon}\left(W_{\delta,q}+\theta\phi_{1}+(1-\theta)\phi_{2}+\delta V_{\delta,q}\right)-f'_{\varepsilon}(W_{\delta,q}+\delta V_{\delta,q})\right)[\phi_{1}-\phi_{2}]\right\Vert _{L^{\frac{2(n-1)}{n}}(\partial M)}
\end{multline*}
and, since $|\phi_{1}-\phi_{2}|^{\frac{2(n-1)}{n}}\in L^{\frac{n}{n-2}}(\partial M)$
and $|f_{\varepsilon}'(\cdot)|^{\frac{2(n-1)}{n}}\in L^{\frac{n}{2}}(\partial M)$,
we have 
\begin{multline*}
\|N(\phi_{1})-N(\phi_{2})\|_{H}\\
\le\left\Vert \left(f'_{\varepsilon}\left(W_{\delta,q}+\theta\phi_{1}+(1-\theta)\phi_{2}+\delta V_{\delta,q}\right)-f'_{\varepsilon}(W_{\delta,q})+\delta V_{\delta,q}\right)\right\Vert _{L^{\frac{2(n-1)}{2}}(\partial M)}\|\phi_{1}-\phi_{2}\|_{H}\\
=\gamma\|\phi_{1}-\phi_{2}\|_{H}
\end{multline*}
where we can choose
\[
\gamma:=\left\Vert \left(f_{\varepsilon}'\left(W_{\delta,q}+\theta\phi_{1}+(1-\theta)\phi_{2}+\delta V_{\delta,q}\right)-f_{\varepsilon}'(W_{\delta,q}+\delta V_{\delta,q})\right)\right\Vert _{L^{\frac{2(n-1)}{n-2}}(\partial M)}<1,
\]
provided $\|\phi_{1}\|_{H}$ and $\|\phi_{2}\|_{H}$ sufficiently
small. 

For the second term we argue in a similar way and, recalling that,
by (\ref{eq:nittka}), $|i^{*}(g)|_{L^{s_{\varepsilon}}(\partial M)}\le|g|_{L^{\frac{2(n-1)+n(n-2)\varepsilon}{n+(n-2)\varepsilon}}(\partial M)}$,
we have
\begin{multline*}
|N(\phi_{1})-N(\phi_{2})|_{L^{s_{\varepsilon}}(\partial M)}\\
\le\left|\left(f'_{\varepsilon}\left(W_{\delta,q}+\theta\phi_{1}+(1-\theta)\phi_{2}+\delta V_{\delta,q}\right)-f'_{\varepsilon}(W_{\delta,q}+\delta V_{\delta,q})\right)[\phi_{1}-\phi_{2}]\right|_{L^{\frac{2(n-1)+n(n-2)\varepsilon}{n+(n-2)\varepsilon}}(\partial M)}
\end{multline*}
Since $\phi_{1},\phi_{2},W_{\delta,q}V_{\delta,q}\in L^{s_{\varepsilon}}$
we have that $|\phi_{1}-\phi_{2}|^{\frac{2(n-1)+n(n-2)\varepsilon}{n+(n-2)\varepsilon}}\in L^{\frac{n+(n-2)\varepsilon}{n-2}}(\partial M)$
and $|f'(\cdot)|^{\frac{2(n-1)+n(n-2)\varepsilon}{n+(n-2)\varepsilon}}\in L^{\frac{n+(n-2)\varepsilon}{2+(n-2)\varepsilon}}(\partial M)$.
So we conclude as above that we can choose $\left|\phi_{1}\right|_{L^{s_{\varepsilon}}(\partial M)}$,
$\left|\phi_{2}\right|_{L^{s_{\varepsilon}}(\partial M)}$ sufficiently
small in order to get
\[
|N(\phi_{1})-N(\phi_{2})|_{L^{s_{\varepsilon}}(\partial M)}\le\gamma\left|\phi_{1}-\phi_{2}\right|_{L^{s_{\varepsilon}}(\partial M)}.
\]
So 
\[
\|N(\phi_{1})-N(\phi_{2})\|_{\mathcal{H}}\le\gamma\|\phi_{1}-\phi_{2}\|_{\mathcal{H}}
\]
with $\gamma<1$, provided $\|\phi_{1}\|_{\mathcal{H}}$, $\|\phi_{2}\|_{\mathcal{H}}$
small enough.

With the same strategy it is possible to prove that if $\|\phi\|_{\mathcal{H}}$
is sufficiently small there exists $\bar{\gamma}<1$ such that $\|N(\phi)\|_{\mathcal{H}}\le\bar{\gamma}\|\phi\|_{\mathcal{H}}$. 

At this point, recalling Lemma \ref{lem:L} and Lemma \ref{lem:R},
it is not difficult to prove that there exists a constant $C>0$ such
that, if $\|\phi\|_{\mathcal{H}}\le C\varepsilon\left|\ln\varepsilon\right|$
then the map
\[
T(\phi):=L^{-1}(N(\phi)+R_{\varepsilon,\delta,q})
\]
is a contraction from the ball $\|\phi\|_{\mathcal{H}}\le C\varepsilon\left|\ln\varepsilon\right|$
in itself, and we get the first claim by the Contraction Mapping Theorem.
The regularity claim can be proven by means of the Implicit Function
Theorem.
\end{proof}

\section{The reduced problem}

For any given $(\delta,q)$, we are able to solve the infinite dimensional
problem (\ref{eq:P-Kort}) by Propostition \ref{prop:phi}. Now, set
$\delta=\lambda\sqrt{\varepsilon}$, we try to solve (\ref{eq:P})
finding a critical point of the functional
\begin{equation}
J_{\varepsilon}(u):=\frac{1}{2}\int_{M}|\nabla_{g}u|^{2}+au^{2}d\mu_{g}-\frac{(n-2)^{2}}{2(n-1)+\varepsilon(n-2)}\int_{\partial M}\left(u^{+}\right)^{\frac{2(n-1)}{n-2}+\varepsilon}d\sigma.\label{eq:Jeps}
\end{equation}
evaluated in $W_{\lambda\sqrt{\varepsilon},q}+\lambda\sqrt{\varepsilon}V_{\lambda\sqrt{\varepsilon},q}+\phi_{\lambda\sqrt{\varepsilon},q}$.
We observe that, by Remark \ref{rem:Nit}, the functional $J_{\varepsilon}$
is well defined on $\mathcal{H}$. Since $J_{\varepsilon}\left(W_{\lambda\sqrt{\varepsilon},q}+\lambda\sqrt{\varepsilon}V_{\lambda\sqrt{\varepsilon},q}+\phi_{\lambda\sqrt{\varepsilon},q}\right)$
depends only, given $\varepsilon$, on $(\lambda,q)\in[a,b]\times\partial M$,
we set $I_{\varepsilon}(\lambda,q):=J_{\varepsilon}\left(W_{\lambda\sqrt{\varepsilon},q}+\delta V_{\lambda\sqrt{\varepsilon},q}+\phi_{\lambda\sqrt{\varepsilon},q}\right)$.
\begin{lem}
\label{lem:JWpiuPhi}Assume $n\ge7$ and $\delta=\lambda\sqrt{\varepsilon}$.
It holds 
\[
\left|I_{\varepsilon}(\lambda,q)-J_{\varepsilon}\left(W_{\delta,q}+\delta V_{\delta,q}\right)\right|=o(\varepsilon)
\]
$C^{0}$-uniformly for $q\in\partial M$ and $\lambda$ in a compact
set of $(0,+\infty)$.
\end{lem}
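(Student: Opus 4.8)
The plan is to Taylor expand $J_{\varepsilon}$ about the approximate solution $W_{\delta,q}+\delta V_{\delta,q}$ in the direction of the correction term $\phi_{\delta,q}$, and to show that both the resulting linear and quadratic contributions are $o(\varepsilon)$, uniformly in $(\lambda,q)$. Throughout set $W:=W_{\delta,q}$, $V:=V_{\delta,q}$, $\phi:=\phi_{\delta,q}$, and recall from Proposition \ref{prop:phi} that $\|\phi\|_{\mathcal{H}}\le C\varepsilon|\ln\varepsilon|$ uniformly for $q\in\partial M$ and $\lambda\in[a,b]$. Since the nonlinearity $t\mapsto (t^{+})^{\frac{2(n-1)}{n-2}+\varepsilon}$ has a continuous second derivative and, by Remark \ref{rem:Nit}, the associated integrals are finite on $\mathcal{H}$, the functional $J_{\varepsilon}$ is of class $C^{2}$ on $\mathcal{H}$, and Taylor's formula with integral remainder applied to $s\mapsto J_{\varepsilon}(W+\delta V+s\phi)$ gives
\[
I_{\varepsilon}(\lambda,q)-J_{\varepsilon}(W+\delta V)=J_{\varepsilon}'(W+\delta V)[\phi]+\int_{0}^{1}(1-s)\,J_{\varepsilon}''(W+\delta V+s\phi)[\phi,\phi]\,ds.
\]
A direct computation from (\ref{eq:Jeps}) and the definition of $f_{\varepsilon}$ gives the identities $J_{\varepsilon}'(u)[\varphi]=\langle\langle u-i^{*}(f_{\varepsilon}(u)),\varphi\rangle\rangle_{H}$ and $J_{\varepsilon}''(u)[\varphi,\varphi]=\|\varphi\|_{H}^{2}-\int_{\partial M}f'_{\varepsilon}(u)\varphi^{2}\,d\sigma$, so that both terms can be estimated with the tools already developed in Section \ref{sec:reduction}.

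For the linear term I would exploit that $\phi\in K_{\delta,q}^{\bot}$, so that inserting the projection $\Pi^{\bot}$ costs nothing:
\[
J_{\varepsilon}'(W+\delta V)[\phi]=\big\langle\big\langle (W+\delta V)-i^{*}(f_{\varepsilon}(W+\delta V)),\phi\big\rangle\big\rangle_{H}=-\langle\langle R,\phi\rangle\rangle_{H},
\]
where $R$ is the remainder defined in (\ref{eq:defR}). Hence $|J_{\varepsilon}'(W+\delta V)[\phi]|\le\|R\|_{\mathcal{H}}\|\phi\|_{\mathcal{H}}$, which is $O(\varepsilon^{2}\ln^{2}\varepsilon)=o(\varepsilon)$ by Lemma \ref{lem:R} and Proposition \ref{prop:phi}, with constants independent of $(\lambda,q)$.

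For the quadratic term it suffices to prove that $|J_{\varepsilon}''(W+\delta V+s\phi)[\phi,\phi]|\le C\|\phi\|_{\mathcal{H}}^{2}$ uniformly in $s\in[0,1]$, since then this contribution is again $O(\varepsilon^{2}\ln^{2}\varepsilon)=o(\varepsilon)$. The term $\|\phi\|_{H}^{2}$ is trivially bounded by $\|\phi\|_{\mathcal{H}}^{2}$; for the boundary integral I would write $f'_{\varepsilon}(\cdot)$ as a multiple of $(\cdot^{+})^{\frac{2}{n-2}+\varepsilon}$ and apply Hölder's inequality on $\partial M$ with exponents conjugate to $s_{\varepsilon}=\frac{2(n-1)}{n-2}+n\varepsilon$, obtaining
\[
\Big|\int_{\partial M}f'_{\varepsilon}(W+\delta V+s\phi)\,\phi^{2}\,d\sigma\Big|\le C\,\big|W+\delta V+s\phi\big|_{L^{s_{\varepsilon}}(\partial M)}^{\frac{2}{n-2}+\varepsilon}\,\big|\phi\big|_{L^{s_{\varepsilon}}(\partial M)}^{2}\le C'\|\phi\|_{\mathcal{H}}^{2},
\]
where one uses that $|W+\delta V+s\phi|_{L^{s_{\varepsilon}}(\partial M)}=\delta^{-O^{+}(\varepsilon)}+O(\varepsilon|\ln\varepsilon|)=O(1)$ since $\delta=\lambda\sqrt{\varepsilon}$, exactly as in the contraction estimates in the proof of Proposition \ref{prop:phi}. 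Combining the two bounds yields $|I_{\varepsilon}(\lambda,q)-J_{\varepsilon}(W+\delta V)|=O(\varepsilon^{2}\ln^{2}\varepsilon)=o(\varepsilon)$, uniformly for $q\in\partial M$ and $\lambda$ in a compact subset of $(0,+\infty)$. The only genuinely delicate point is the exponent bookkeeping in the quadratic term: one must check that the Hölder exponents associated with the slightly supercritical power $s_{\varepsilon}$ stay in the admissible range for $\varepsilon$ small and that the relevant $L^{s_{\varepsilon}}(\partial M)$ norms of $W+\delta V+s\phi$ are uniformly bounded — but this is precisely what the space $\mathcal{H}$ and Remark \ref{rem:Nit} were set up to provide, and everything else follows routinely from the smallness of $\|\phi\|_{\mathcal{H}}$ and $\|R\|_{\mathcal{H}}$.
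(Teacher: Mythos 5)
Your proposal is correct, and the overall structure — a second-order Taylor expansion of $J_{\varepsilon}$ at $W_{\delta,q}+\delta V_{\delta,q}$ in the direction $\phi_{\delta,q}$, followed by $o(\varepsilon)$ bounds on the first and second order contributions — is exactly what the paper does. The quadratic term is handled essentially identically in both: H\"older on the boundary using the $L^{s_{\varepsilon}}(\partial M)$ norms that the space $\mathcal{H}$ is designed to control, giving $O(\|\phi_{\delta,q}\|_{\mathcal{H}}^{2})=O(\varepsilon^{2}\ln^{2}\varepsilon)$.

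Where you improve on the paper is the linear term. You use the identity $J_{\varepsilon}'(u)[\varphi]=\langle\langle u-i^{*}(f_{\varepsilon}(u)),\varphi\rangle\rangle_{H}$ together with the fact that $\phi_{\delta,q}\in K_{\delta,q}^{\bot}$ is orthogonal to $K_{\delta,q}$, so that $J_{\varepsilon}'(W_{\delta,q}+\delta V_{\delta,q})[\phi_{\delta,q}]=-\langle\langle R,\phi_{\delta,q}\rangle\rangle_{H}$ and Lemma \ref{lem:R} plus Proposition \ref{prop:phi} immediately give $O(\varepsilon^{2}\ln^{2}\varepsilon)$. The paper instead expands the first derivative into its individual integrals (the $\Delta_{g}$ term, the $a(x)$ term, the boundary $f_{0}$ term, the $f_{\varepsilon}-f_{0}$ term), integrates by parts, and re-derives the same $O(\delta^{2})$ and $O(\varepsilon|\ln\varepsilon|)$ bounds that are already present in the proof of Lemma \ref{lem:R}. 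Your route avoids that duplication and makes explicit that the linear term is controlled by the product $\|R\|_{\mathcal{H}}\|\phi_{\delta,q}\|_{\mathcal{H}}$, which is conceptually cleaner. Both arguments use the same underlying estimates, so this is a streamlining rather than a genuinely different strategy, but it is a worthwhile one.

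One small caveat you already anticipate: the claim that $J_{\varepsilon}\in C^{2}(\mathcal{H})$ and that $|W_{\delta,q}+\delta V_{\delta,q}+s\phi_{\delta,q}|_{L^{s_{\varepsilon}}(\partial M)}=O(1)$ for $\delta=\lambda\sqrt{\varepsilon}$ both need the exponent bookkeeping you mention (the relevant H\"older exponent is strictly below $s_{\varepsilon}$ and the factor $\delta^{-O^{+}(\varepsilon)}$ is bounded because $\delta\sim\sqrt{\varepsilon}$); this is the same bookkeeping done in Lemma \ref{lem:R} and Proposition \ref{prop:phi}, so it is fine to cite it rather than redo it, as you do.
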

This result can be obtained following the lines of \cite[Lemma 6]{GMP19},
so we postpone the proof in the Appendix, and we proceed to the main
result of this section.
\begin{prop}
\label{lem:expJeps}Assume $n\ge7$ and $\delta=\lambda\sqrt{\varepsilon}$.
It holds 
\[
J_{\varepsilon}(W_{\lambda\sqrt{\varepsilon},q}+\lambda\sqrt{\varepsilon}V_{\lambda\sqrt{\varepsilon},q})=A+B(\varepsilon)+\varepsilon\lambda\varphi(q)+C\varepsilon\ln\lambda+o(\varepsilon),
\]
$C^{0}$-uniformly for $q\in\partial M$ and $\lambda$ in a compact
set of $(0,+\infty)$, where

\begin{align*}
A= & \frac{(n-2)(n-3)}{2(n-1)^{2}}\omega_{n-1}I_{n-1}^{n}\\
B(\varepsilon)= & \varepsilon\left[\frac{(n-2)^{3}}{2(n-1)}\int_{\mathbb{R}^{n-1}}U^{\frac{2(n-1)}{n-2}}(z,0)dz-\frac{n-2}{2(n-1)}\int_{\mathbb{R}^{n-1}}U^{\frac{2(n-1)}{n-2}}(z,0)\ln U(z,0)dz\right]\\
 & -\varepsilon|\ln\varepsilon|\frac{(n-2)^{2}}{8(n-1)}\int_{\mathbb{R}^{n-1}}U^{\frac{2(n-1)}{n-2}}(z,0)dz\\
\varphi(q)= & \frac{1}{2}\int_{\mathbb{R}_{+}^{n}}\Delta v_{q}v_{q}dzdt-\frac{(n-6)(n-2)\omega_{n-1}I_{n-1}^{n}}{4(n-1)^{2}(n-4)}\|\pi(q)\|^{2}\le0.\\
C= & \frac{(n-2)^{2}(n-3)}{4(n-1)^{2}}\omega_{n-1}I_{n-1}^{n}>0.
\end{align*}
\end{prop}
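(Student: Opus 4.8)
The plan is to compute the energy $J_{\varepsilon}(W_{\delta,q}+\delta V_{\delta,q})$ by a careful Taylor expansion in $\varepsilon$ (and hence in $\delta=\lambda\sqrt{\varepsilon}$), keeping all terms of order $\varepsilon$ and $\varepsilon|\ln\varepsilon|$ and discarding $o(\varepsilon)$. I would split $J_{\varepsilon}$ into its quadratic (Dirichlet plus zero-order) part and its boundary nonlinear part, and treat each separately.

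\textbf{Step 1: the quadratic part.} Write $\frac12\int_M(|\nabla_g(W_{\delta,q}+\delta V_{\delta,q})|^2+a(W_{\delta,q}+\delta V_{\delta,q})^2)\,d\mu_g$. Integrating by parts this becomes a boundary term $-\frac12\int_{\partial M}\frac{\partial}{\partial\nu}(W_{\delta,q}+\delta V_{\delta,q})(W_{\delta,q}+\delta V_{\delta,q})\,d\sigma$ plus a bulk term $\frac12\int_M(-\Delta_g(W_{\delta,q}+\delta V_{\delta,q})+a(W_{\delta,q}+\delta V_{\delta,q}))(W_{\delta,q}+\delta V_{\delta,q})\,d\mu_g$. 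For the bulk term I would use the expansions (\ref{eq:R1})--(\ref{eq:R2}) of $\Delta_g W_{\delta,q}$ and $\delta\Delta_g V_{\delta,q}$ in Fermi coordinates: the leading $2h_{ij}(0)x_n\partial_{ij}^2U$ contributions cancel exactly (this is precisely why $v_q$ is chosen to solve (\ref{eq:vqdef})), and one then expands $|g|^{1/2}$ via (\ref{eq:|g|}) to pick up the $\|\pi\|^2$ and $\operatorname{Ric}_\eta$ contributions together with the term $\frac12\int_{\mathbb R^n_+}\Delta v_q\,v_q$. Here the rescaling $y=\delta x$ converts each geometric factor $t^2\sim\delta^2 x_n^2$ into a factor $\delta^2=\lambda^2\varepsilon$, producing the $\varepsilon\lambda\varphi(q)$ term and contributing to the constant $A$ through the $I_{n-1}^n$ integrals; the curvature identities (\ref{eq:Iam}) are used to reduce all radial integrals to multiples of $I_{n-1}^n$. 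The boundary term combines with the nonlinear part below.

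\textbf{Step 2: the boundary nonlinear part.} For $\frac{(n-2)^2}{2(n-1)+\varepsilon(n-2)}\int_{\partial M}((W_{\delta,q}+\delta V_{\delta,q})^+)^{\frac{2(n-1)}{n-2}+\varepsilon}\,d\sigma$ I would rescale to $\mathbb R^{n-1}$ and expand in $\varepsilon$. The prefactor $\frac{(n-2)^2}{2(n-1)+\varepsilon(n-2)}=\frac{(n-2)^2}{2(n-1)}(1-\frac{n-2}{2(n-1)}\varepsilon+\dots)$ contributes part of $B(\varepsilon)$. The integrand, after inserting $u=\delta^{-\frac{n-2}{2}}(U+\delta v_q)$ and using the expansions (\ref{eq:Uallaeps}), (\ref{eq:deltaallaeps}), produces: a main term $\int_{\mathbb R^{n-1}}U^{\frac{2(n-1)}{n-2}}(z,0)\,dz$ (which feeds into $A$ after accounting for the Escobar value of the bubble energy and $\omega_{n-1}I_{n-1}^n$), an $\varepsilon|\ln\delta|$-term — and since $|\ln\delta|=\frac12|\ln\varepsilon|+\ln\lambda+o(1)$, this is exactly where the $\varepsilon|\ln\varepsilon|$ term in $B(\varepsilon)$ and the $C\varepsilon\ln\lambda$ term both originate — an $\varepsilon\int U^{\frac{2(n-1)}{n-2}}\ln U$ term (the other half of $B(\varepsilon)$), and an $O(\delta^2)=O(\varepsilon)\cdot\lambda^2$ cross term from the $\delta v_q$ correction which contributes to $\varphi(q)$ (the second piece, $-\frac{(n-6)(n-2)\omega_{n-1}I_{n-1}^n}{4(n-1)^2(n-4)}\|\pi(q)\|^2$, comes from combining this with the bulk curvature terms and using (\ref{eq:Uvq}), (\ref{eq:gradvq}) and the identities (\ref{eq:Iam})).

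\textbf{Step 3: assembling and the sign of $\varphi$.} After collecting the $\varepsilon$-independent constants into $A$, the pure $\varepsilon$- and $\varepsilon|\ln\varepsilon|$-terms into $B(\varepsilon)$, the $\varepsilon\lambda$-coefficient into $\varphi(q)$, and the $\varepsilon\ln\lambda$-coefficient into $C$, one obtains the claimed formula; the value $C=\frac{(n-2)^2(n-3)}{4(n-1)^2}\omega_{n-1}I_{n-1}^n>0$ follows by expressing $\int_{\mathbb R^{n-1}}U^{\frac{2(n-1)}{n-2}}(z,0)\,dz$ via $\omega_{n-1}I_{n-1}^n$ (using the identities in (\ref{eq:Iam})) and matching constants with $A$. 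Finally $\varphi(q)\le0$: the curvature term is manifestly $\le0$ since $n\ge7$ makes $\frac{(n-6)(n-2)}{4(n-1)^2(n-4)}\ge0$ and $\|\pi(q)\|^2\ge0$, while $\frac12\int_{\mathbb R^n_+}\Delta v_q\,v_q\le0$ is exactly the estimate (\ref{new}) recorded in the preliminaries (it is the quadratic form applied to $v_q$, nonpositive because $v_q$ solves (\ref{eq:vqdef}) and is $L^2$-orthogonal to the kernel elements $j_b$).

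\textbf{Main obstacle.} The delicate point is bookkeeping the logarithmic terms: the factor $\delta^{-\varepsilon\frac{n-2}{2}}$ expands as $1-\varepsilon\frac{n-2}{2}\ln\delta+\dots$ with $\ln\delta=\frac12\ln\varepsilon+\ln\lambda$, so the single quantity $|\ln\delta|$ simultaneously generates the $\varepsilon|\ln\varepsilon|$ piece of $B(\varepsilon)$ and the $C\varepsilon\ln\lambda$ piece; one must be careful that the $\varepsilon^2\ln^2\delta=O(\varepsilon^2|\ln\varepsilon|^2)=o(\varepsilon)$ and all mixed $\varepsilon\cdot\delta^2$ remainders are genuinely $o(\varepsilon)$, and that the domain truncation $|z|<1/\delta$ (versus all of $\mathbb R^{n-1}$) costs only $O(\delta^{n-2})=o(\varepsilon)$ thanks to the decay of $U$ and (\ref{eq:gradvq}). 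The identification of the $\|\pi(q)\|^2$-coefficient in $\varphi(q)$ also requires combining the bulk metric expansion (both the $\|\pi\|^2 t^2$ term in $|g|^{1/2}$ and the $h_{ik}h_{kj}t^2$ term in $g^{ij}$) with the $\delta v_q$ boundary cross term, and carefully applying the reduction identities (\ref{eq:Iam}) — this is where most of the computational care is needed.
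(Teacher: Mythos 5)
Your proposal is correct and follows essentially the paper's route: the $\varepsilon=0$ part of the energy (which the paper cites from \cite[Proposition 13]{GMP18} and you re-derive via integration by parts and the Fermi-coordinate expansions (\ref{eq:|g|})--(\ref{eq:gin}), with the $2h_{ij}(0)x_n\partial^2_{ij}U$ cancellation coming from (\ref{eq:vqdef})) supplies $A$ and the $\varphi(q)$-term, while the $\varepsilon$-dependent correction of the boundary integral, Taylor-expanded via (\ref{eq:Uallaeps})--(\ref{eq:deltaallaeps}) with $\ln\delta=\tfrac12\ln\varepsilon+\ln\lambda$, produces $B(\varepsilon)$ and $C\varepsilon\ln\lambda$. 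One consistency point worth flagging: since $\delta^{2}=\lambda^{2}\varepsilon$, the coefficient of $\varphi(q)$ must be $\varepsilon\lambda^{2}$ --- this is what the paper's own proof writes and what your observation $O(\delta^{2})=O(\varepsilon)\cdot\lambda^{2}$ gives --- so the $\varepsilon\lambda\varphi(q)$ appearing both in the stated expansion and in your Step~1 is a slip for $\varepsilon\lambda^{2}\varphi(q)$ (harmless for the existence argument in the theorem, since $\lambda^{2}\varphi(q)+C\ln\lambda$ still has a $C^{0}$-stable interior maximum, but it should be corrected).
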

\begin{proof}
Since $\frac{(n-2)^{2}}{2(n-1)+\varepsilon(n-2)}=\frac{(n-2)^{2}}{2(n-1)}-\varepsilon\frac{(n-2)^{3}}{2(n-1)}+o(\varepsilon)$,
we can write
\begin{multline*}
J_{\varepsilon}(W_{\delta,q}+\delta V_{\delta,q})\\
=\frac{1}{2}\int_{M}|\nabla_{g}(W_{\delta,q}+\delta V_{\delta,q})|^{2}+a(W_{\delta,q}+\delta V_{\delta,q})^{2}d\mu_{g}\\
-\left[\frac{(n-2)^{2}}{2(n-1)}-\varepsilon\frac{(n-2)^{3}}{2(n-1)}+o(\varepsilon)\right]\int_{\partial M}\left((W_{\delta,q}+\delta V_{\delta,q})^{+}\right)^{\frac{2(n-1)}{n-2}+\varepsilon}d\sigma\\
=\frac{1}{2}\int_{M}|\nabla_{g}(W_{\delta,q}+\delta V_{\delta,q})|^{2}+a(W_{\delta,q}+\delta V_{\delta,q})^{2}d\mu_{g}\\
-\frac{(n-2)^{2}}{2(n-1)}\int_{\partial M}\left((W_{\delta,q}+\delta V_{\delta,q})^{+}\right)^{\frac{2(n-1)}{n-2}}d\sigma\\
-\frac{(n-2)^{2}}{2(n-1)}\int_{\partial M}\left((W_{\delta,q}+\delta V_{\delta,q})^{+}\right)^{\frac{2(n-1)}{n-2}+\varepsilon}-\left((W_{\delta,q}+\delta V_{\delta,q})^{+}\right)^{\frac{2(n-1)}{n-2}}d\sigma\\
+\left[\varepsilon\frac{(n-2)^{3}}{2(n-1)}+o(\varepsilon)\right]\int_{\partial M}\left((W_{\delta,q}+\delta V_{\delta,q})^{+}\right)^{\frac{2(n-1)}{n-2}+\varepsilon}d\sigma.
\end{multline*}
For the first part we proceed as in \cite[Proposition 13]{GMP18}
(which we refer to for the proof) obtaining that
\begin{multline*}
\frac{1}{2}\int_{M}|\nabla_{g}(W_{\delta,q}+\delta V_{\delta,q})|^{2}+a(W_{\delta,q}+\delta V_{\delta,q})^{2}d\mu_{g}\\
-\frac{(n-2)^{2}}{2(n-1)}\int_{\partial M}\left((W_{\delta,q}+\delta V_{\delta,q})^{+}\right)^{\frac{2(n-1)}{n-2}}d\sigma=A+\varepsilon\lambda^{2}\varphi(q)+o(\varepsilon).
\end{multline*}
Using again (\ref{eq:Uallaeps}) and (\ref{eq:deltaallaeps}), proceeding
similarly to (\ref{eq:feps-f0}), and recalling that $\delta=\lambda\sqrt{\varepsilon}$
we have

\begin{multline*}
\int_{\partial M}\left((W_{\delta,q}+\delta V_{\delta,q})^{+}\right)^{\frac{2(n-1)}{n-2}+\varepsilon}-\left((W_{\delta,q}+\delta V_{\delta,q})^{+}\right)^{\frac{2(n-1)}{n-2}}d\sigma\\
=\int_{|z|<\frac{1}{\delta}}\frac{1}{\delta^{\varepsilon\frac{n-2}{2}}}\left((U+\delta v_{q})^{\varepsilon}-1\right)(U+\delta v_{q})^{\frac{2(n-1)}{n-2}}dz+o(\delta^{2})\\
\le\int_{\mathbb{R}^{n-1}}\left(-\frac{n-2}{4}\varepsilon\ln\varepsilon-\frac{n-2}{2}\varepsilon\ln\lambda+\varepsilon\ln(U)+O(\varepsilon^{2}\ln\varepsilon)\right)U^{\frac{2(n-1)}{n-2}}dz+o(\varepsilon)\\
=\frac{n-2}{4}\varepsilon|\ln\varepsilon|\int_{\mathbb{R}^{n-1}}U^{\frac{2(n-1)}{n-2}}dz+\varepsilon\int_{\mathbb{R}^{n-1}}U^{\frac{2(n-1)}{n-2}}\ln(U)dz\\
-\frac{n-2}{2}\varepsilon\ln\lambda\int_{\mathbb{R}^{n-1}}U^{\frac{2(n-1)}{n-2}}dz+o(\varepsilon).
\end{multline*}
Finally, with the same technique, 
\begin{multline*}
\left[\varepsilon\frac{(n-2)^{3}}{2(n-1)}+o(\varepsilon)\right]\int_{\partial M}\left((W_{\delta,q}+\delta V_{\delta,q})^{+}\right)^{\frac{2(n-1)}{n-2}+\varepsilon}d\sigma\\
=\left[\varepsilon\frac{(n-2)^{3}}{2(n-1)}+o(\varepsilon)\right]\int_{|z|<\frac{1}{\delta}}\frac{1}{\delta^{\varepsilon\frac{n-2}{2}}}(U+\delta v_{q})^{\frac{2(n-1)}{n-2}}(U+\delta v_{q})^{\varepsilon}dz+o(\delta^{2})\\
=\varepsilon\frac{(n-2)^{3}}{2(n-1)}\int_{\mathbb{R}^{n-1}}U^{\frac{2(n-1)}{n-2}}+o(\varepsilon),
\end{multline*}
and the proof follows easily taking in account that
\[
\frac{(n-2)^{2}}{4(n-1)}\int_{\mathbb{R}^{n-1}}U^{\frac{2(n-1)}{n-2}}dz=\frac{(n-2)^{2}}{4(n-1)}\omega_{n-1}I_{n-1}^{n-2}=\frac{(n-2)^{2}(n-3)}{4(n-1)^{2}}\omega_{n-1}I_{n-1}^{n}.
\]
\end{proof}

\section{Proof of Theorem \ref{almaraz}.}

Once we have a critical point of the reduced functional 
\[
I_{\varepsilon}(\lambda,q):=J_{\varepsilon}(W_{\lambda\sqrt{\varepsilon},q}+\lambda\sqrt{\varepsilon}V_{\lambda\sqrt{\varepsilon},q}+\phi_{\lambda\sqrt{\varepsilon},q})
\]
we solve Problem (\ref{eq:P}). In fact it holds the following result.
\begin{lem}
\label{lem:punticritici}If $(\bar{\lambda},\bar{q})\in(0,+\infty)\times\partial M$
is a critical point for the reduced functional $I_{\varepsilon}(\lambda,q)$,
then the function $W_{\bar{\lambda}\sqrt{\varepsilon},\bar{q}}+\bar{\lambda}\sqrt{\varepsilon}V_{\lambda\sqrt{\varepsilon},\bar{q}}+\phi_{\lambda\sqrt{\varepsilon},q}$
is a solution of (\ref{eq:P}). Here $\phi_{\lambda\sqrt{\varepsilon},q}$
is defined in Proposition \ref{prop:phi}.
\end{lem}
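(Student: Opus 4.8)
The plan is to show that a critical point $(\bar\lambda,\bar q)$ of the reduced functional $I_\varepsilon$ produces, via $u:=W_{\bar\delta,\bar q}+\bar\delta V_{\bar\delta,\bar q}+\phi_{\bar\delta,\bar q}$ (with $\bar\delta=\bar\lambda\sqrt\varepsilon$), a genuine solution of the full problem \eqref{eq:P*}, equivalently \eqref{eq:P}. By Proposition \ref{prop:phi}, $\phi_{\delta,q}$ solves the orthogonal equation \eqref{eq:P-Kort}, so the only thing missing is the finite-dimensional equation \eqref{eq:P-K}, i.e.\ that the $K_{\delta,q}$-component of $u-i^*(f_\varepsilon(u))$ also vanishes. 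The standard device is to observe that $J_\varepsilon$ restricted to the manifold of functions of the form $W_{\delta,q}+\delta V_{\delta,q}+\phi_{\delta,q}$ has $I_\varepsilon(\lambda,q)$ as its expression, and that $u$ solves \eqref{eq:P} if and only if $u$ is a critical point of $J_\varepsilon$ on all of $\mathcal H$; one then checks that criticality of $I_\varepsilon$ in the reduced variables forces criticality of $J_\varepsilon$ in the full space.

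Concretely, first I would record that $J_\varepsilon'(u)=0$ in $\mathcal H^*$ is exactly equivalent to $u=i^*(f_\varepsilon(u))$, since $i^*$ is the inverse of $-\Delta_g+a$ with the Neumann-type boundary pairing, so the weak formulation of \eqref{eq:P} is precisely $\langle\langle u-i^*(f_\varepsilon(u)),\varphi\rangle\rangle_H=0$ for all $\varphi$. Next, since $\phi_{\delta,q}\in K_{\delta,q}^\perp$ solves \eqref{eq:P-Kort}, we already have $J_\varepsilon'(u)[\varphi]=0$ for every $\varphi\in K_{\delta,q}^\perp$; hence $J_\varepsilon'(u)$ annihilates $K_{\delta,q}^\perp$ and it remains only to show $J_\varepsilon'(u)[Z^b_{\delta,q}]=0$ for $b=1,\dots,n$. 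Differentiating the identity $I_\varepsilon(\lambda,q)=J_\varepsilon(W_{\delta,q}+\delta V_{\delta,q}+\phi_{\delta,q})$ with respect to the parameters $(\lambda,q)$ and using $\nabla I_\varepsilon(\bar\lambda,\bar q)=0$ together with the chain rule, one obtains a linear system
\[
\sum_{b=1}^n c_b\,\big\langle\big\langle Z^b_{\delta,q},\partial_{\lambda}(W_{\delta,q}+\delta V_{\delta,q}+\phi_{\delta,q})\big\rangle\big\rangle_H=0,\qquad
\sum_{b=1}^n c_b\,\big\langle\big\langle Z^b_{\delta,q},\partial_{q_j}(W_{\delta,q}+\delta V_{\delta,q}+\phi_{\delta,q})\big\rangle\big\rangle_H=0,
\]
where $c_b$ are the (a priori unknown) coefficients in $J_\varepsilon'(u)=\sum_b c_b Z^b_{\delta,q}$. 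The derivatives $\partial_\lambda$ and $\partial_{q_j}$ of $W_{\delta,q}+\delta V_{\delta,q}$ are, to leading order, linear combinations of the $Z^b_{\delta,q}$ (this is the usual fact that differentiating the bubble in its parameters reproduces the kernel functions $j_b$), and the contributions of $\partial\phi_{\delta,q}$ are lower order by the bound $\|\phi_{\delta,q}\|_{\mathcal H}=O(\varepsilon|\ln\varepsilon|)$ from Proposition \ref{prop:phi} together with the $C^1$ dependence on $q$; similarly the $\mathcal H$-norm continuity in $\lambda$. Thus the matrix of the system is, for $\varepsilon$ small, a small perturbation of an invertible (diagonal-dominant) matrix whose entries are the nondegenerate quantities $\langle\langle Z^b_{\delta,q},Z^{b'}_{\delta,q}\rangle\rangle_H$, so it is invertible and forces $c_b=0$ for all $b$. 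Hence $J_\varepsilon'(u)=0$ and $u$ solves \eqref{eq:P}.

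The main obstacle is the invertibility of that $n\times n$ matrix: one must verify carefully that differentiating $W_{\delta,q}+\delta V_{\delta,q}+\phi_{\delta,q}$ with respect to $\lambda$ and the boundary coordinates $q_j$ yields, after rescaling, the functions $j_b$ plus an error that is genuinely negligible in $\mathcal H$ uniformly for $\lambda$ in a compact subset of $(0,+\infty)$ and $q\in\partial M$. This requires the $C^1$ regularity of $q\mapsto\phi_{\delta,q}$ and of $q\mapsto v_q$ (both recorded in Proposition \ref{prop:phi} and in the preliminaries), the decay estimate \eqref{eq:gradvq} on $v_q$, and the scaling bookkeeping in the Fermi chart. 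Once this is in place, the conclusion is a routine application of the Contraction/Implicit Function machinery already invoked in the paper, and I would simply cite the analogous arguments in \cite{GMP18,GMP19} for the detailed computation of the matrix entries rather than reproduce them.
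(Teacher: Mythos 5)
Your proposal follows essentially the same strategy as the paper: since $\phi_{\delta,q}$ solves \eqref{eq:P-Kort}, the residual $u-i^{*}(f_{\varepsilon}(u))$ lies in $K_{\delta,q}$, and differentiating $I_{\varepsilon}$ at the critical point $(\bar\lambda,\bar q)$ in the parameter directions yields a linear system for the coefficients $c_b$ whose matrix is (after rescaling) a small perturbation of the invertible Gram matrix of the $Z^b_{\delta,q}$, forcing $c_b=0$. The paper carries out exactly this computation, quoting the inner-product estimates of Lemmas 6.1--6.2 of \cite{MP09} for the matrix entries, so your argument is correct and is the one used in the text.
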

\begin{proof}
The proof is similar to the proofs of \cite[Lemma 15]{GMP18} and
\cite[Proposition 5]{GMP16}, so we sketch only the main steps. Set
$q=q(y)=\psi_{\bar{q}}^{\partial}(y)$. Since $(\bar{\lambda},\bar{q})$
is a critical point for the $I_{\varepsilon}(\lambda,q)$, and since
$W_{\bar{\lambda}\sqrt{\varepsilon},\bar{q}}+\bar{\lambda}\sqrt{\varepsilon}V_{\lambda\sqrt{\varepsilon},\bar{q}}+\phi_{\lambda\sqrt{\varepsilon},q}$
solves (\ref{eq:P-Kort}), we have, for $h=1,\dots,n-1$,

\begin{align*}
0= & \left.\frac{\partial}{\partial y_{h}}I_{\varepsilon}(\bar{\lambda},q(y))\right|_{y=0}\\
= & \langle\!\langle W_{\bar{\lambda}\sqrt{\varepsilon},q(y)}+\bar{\lambda}\sqrt{\varepsilon}V_{\bar{\lambda}\sqrt{\varepsilon},q(y)}+\phi_{\bar{\lambda}\sqrt{\varepsilon},q(y)}-i^{*}\left(f_{\varepsilon}(W_{\bar{\lambda}\sqrt{\varepsilon},q(y)}+\bar{\lambda}\sqrt{\varepsilon}V_{\bar{\lambda}\sqrt{\varepsilon},q(y)}+\phi_{\bar{\lambda}\sqrt{\varepsilon},q(y)})\right),\\
 & \left.\frac{\partial}{\partial y_{h}}(W_{\bar{\lambda}\sqrt{\varepsilon},q(y)}+\bar{\lambda}\sqrt{\varepsilon}V_{\bar{\lambda}\sqrt{\varepsilon},q(y)}+\phi_{\bar{\lambda}\sqrt{\varepsilon},q(y)})\rangle\!\rangle_{H}\right|_{y=0}\\
= & \sum_{i=1}^{n}c_{\varepsilon}^{i}\left.\langle\!\langle Z_{\bar{\lambda}\sqrt{\varepsilon},q(y)}^{i},\frac{\partial}{\partial y_{h}}(W_{\bar{\lambda}\sqrt{\varepsilon},q(y)}+\bar{\lambda}\sqrt{\varepsilon}V_{\bar{\lambda}\sqrt{\varepsilon},q(y)}+\phi_{\bar{\lambda}\sqrt{\varepsilon},q(y)})\rangle\!\rangle_{H}\right|_{y=0}\\
= & \sum_{i=1}^{n}c_{\varepsilon}^{i}\left.\langle\!\langle Z_{\bar{\lambda}\sqrt{\varepsilon},q(y)}^{i},\frac{\partial}{\partial y_{h}}W_{\varepsilon\bar{\lambda},q(y)}\rangle\!\rangle_{H}\right|_{y=0}+\bar{\lambda}\sqrt{\varepsilon}\sum_{i=1}^{n}c_{\varepsilon}^{i}\left.\langle\!\langle Z_{\varepsilon\bar{\lambda},q(y)}^{i},\frac{\partial}{\partial y_{h}}V_{\varepsilon\bar{\lambda},q(y)}\rangle\!\rangle_{H}\right|_{y=0}\\
 & -\sum_{i=1}^{n}c_{\varepsilon}^{i}\left.\langle\!\langle\frac{\partial}{\partial y_{h}}Z_{\bar{\lambda}\sqrt{\varepsilon},q(y)}^{i},\Phi_{\varepsilon\bar{\lambda},q(y)}\rangle\!\rangle_{H}\right|_{y=0}
\end{align*}

Arguing as in Lemma 6.1 and Lemma 6.2 of \cite{MP09} we have
\begin{align*}
\langle\!\langle Z_{\bar{\lambda}\sqrt{\varepsilon},q(y)}^{i},\frac{\partial}{\partial y_{h}}W_{\bar{\lambda}\sqrt{\varepsilon},q(y)})\rangle\!\rangle_{H}= & \frac{\delta_{ih}}{\bar{\lambda}\sqrt{\varepsilon}}+o(1)\\
\langle\!\langle Z_{\bar{\lambda}\sqrt{\varepsilon},q(y)}^{i},\frac{\partial}{\partial y_{h}}V_{\bar{\lambda}\sqrt{\varepsilon},q(y)}\rangle\!\rangle_{H} & \le\left\Vert Z_{\varepsilon\bar{\lambda},q(y)}^{i}\right\Vert _{H}\left\Vert \frac{\partial}{\partial y_{h}}V_{\varepsilon\bar{\lambda},q(y)}\right\Vert _{H}=O\left(\frac{1}{\sqrt{\varepsilon}}\right)\\
\langle\!\langle\frac{\partial}{\partial y_{h}}Z_{\varepsilon\bar{\lambda},q(y)}^{i},\Phi_{\varepsilon\bar{\lambda},q(y)}\rangle\!\rangle_{H} & \le\left\Vert \frac{\partial}{\partial y_{h}}Z_{\varepsilon\bar{\lambda},q(y)}^{i}\right\Vert _{H}\left\Vert \Phi_{\varepsilon\bar{\lambda},q(y)}\right\Vert _{H}=o(1).
\end{align*}
We conclude that 
\[
0=\frac{1}{\bar{\lambda}\sqrt{\varepsilon}}\sum_{i=1}^{n}c_{\varepsilon}^{i}\left(\delta_{ih}+O(1)\right)
\]
and so $c_{\varepsilon}^{h}=0$, where $h=1,\dots,n-1$.

Arguing analogously for $\left.\frac{\partial}{\partial\lambda}I_{\varepsilon}(\lambda,\bar{q})\right|_{\lambda=\bar{\lambda}}$
we can prove that $c_{\varepsilon}^{i}=0$ for all $i=1,\dots,n$,
obtaining that $W_{\bar{\lambda}\sqrt{\varepsilon},\bar{q}}+\bar{\lambda}\sqrt{\varepsilon}V_{\lambda\sqrt{\varepsilon},\bar{q}}+\phi_{\lambda\sqrt{\varepsilon},q}$
solves also (\ref{eq:P-K}), and so the proof is complete
\end{proof}
\begin{proof}[Proof of Theorem \ref{almaraz}]
 By our assumption on the second fundamental form and by (\ref{new}),
we have that the function $\varphi(q)$ defined in Proposition \ref{lem:expJeps}
is strictly negative on $\partial M$. We recall as well, that the
number $C$ defined in the same proposition is positive. Then, defined
\begin{align*}
I & :[a,b]\times\partial M\rightarrow\mathbb{R}\\
I(\lambda,q) & =\lambda\varphi(q)+C\ln d
\end{align*}
 we have that for any $M<0$ there exist $a,b$ such that 
\[
I(\lambda,q)<M\text{ for any }q\in\partial M,\ \lambda\not\in[a,b]
\]
and
\[
\frac{\partial I}{\partial\lambda}(a,q)\neq0,\ \ \frac{\partial I}{\partial\lambda}(a,q)\neq0\ \forall q\in\partial M.
\]
Then the function $I$ admits a absolute maximum on $[a,b]\times\partial M$.
This maximum is also $C^{0}$-stable. in other words, if $(\lambda_{0},q_{0})$
is the maximum point for $I$, for any function $f\in C^{1}([a,b]\times\partial M)$
with $\|f\|_{C^{0}}$ sufficiently small, then the function $I+f$
on $[a,b]\times\partial M$ admits a maximum point $(\bar{\lambda},\bar{q})$
close to $(\lambda_{0},q_{0})$. 

Then, taken an $\varepsilon$ sufficiently small, in light of Proposition
\ref{lem:JWpiuPhi} and Proposition \ref{lem:expJeps}, there exists
a pair $(\lambda_{\varepsilon},q_{\varepsilon})$ maximum point for
$I_{\varepsilon}(\lambda,q)$. Thus, by Lemma \ref{lem:punticritici},
$u_{\varepsilon}:=W_{\lambda_{\varepsilon}\sqrt{\varepsilon},q_{\varepsilon}}+\lambda_{\varepsilon}\sqrt{\varepsilon}V_{\lambda_{\varepsilon}\sqrt{\varepsilon},q_{\varepsilon}}+\phi_{\lambda_{\varepsilon}\sqrt{\varepsilon},q_{\varepsilon}}\in\mathcal{H}$
is a solution of (\ref{eq:Pmain}). By construction $u_{\varepsilon}$
blows up at $q_{\varepsilon}\rightarrow q_{0}$ when $\varepsilon\rightarrow0$.
\end{proof}

\section{Appendix}
\begin{proof}[Proof of Lemma \ref{lem:JWpiuPhi}]
 We have, for some $\theta\in(0,1)$ 
\begin{multline*}
J_{\varepsilon}(W_{\delta,q}+\delta V_{\delta,q}+\phi_{\delta,q})-J_{\varepsilon}(W_{\delta,q}+\delta V_{\delta,q})=J_{\varepsilon}'(W_{\delta,q}+\delta^{2}V_{\delta,q})[\phi_{\delta,q}]\\
+\frac{1}{2}J_{\varepsilon}''(W_{\delta,q}+\delta V_{\delta,q}+\theta\phi_{\delta,q})[\phi_{\delta,q},\phi_{\delta,q}]\\
=\int_{M}\left(\nabla_{g}W_{\delta,q}+\delta\nabla_{g}V_{\delta,q}\right)\nabla_{g}\phi_{\delta,q}+a(x)\left(W_{\delta,q}+\delta V_{\delta,q}\right)\phi_{\delta,q}d\mu_{g}\\
-(n-2)\int_{\partial M}\left(\left(W_{\delta,q}+\delta^{2}V_{\delta,q}\right)^{+}\right)^{\frac{n}{n-2}+\varepsilon}\phi_{\delta,q}d\sigma_{g}\\
+\frac{1}{2}\int_{M}|\nabla_{g}\phi_{\delta,q}|^{2}+a(x)\phi_{\delta,q}^{2}d\mu_{g}\\
-\frac{n+\varepsilon(n-2)}{2}\int_{\partial M}\Lambda_{q}^{\varepsilon}\left(\left(W_{\delta,q}+\delta V_{\delta,q}+\theta\phi_{\delta,q}\right)^{+}\right)^{\frac{2}{n-2}+\varepsilon}\phi_{\delta,q}^{2}d\sigma_{g}.
\end{multline*}
By definition of $\|\cdot\|_{H}$,
\[
\int_{M}|\nabla_{g}\phi_{\delta,q}|^{2}+a\phi_{\delta,q}^{2}d\mu_{g}=\|\phi_{\delta,q}\|_{H}^{2}=o(\varepsilon).
\]
By Holder inequality one can easily obtain
\[
\int_{M}a\left(W_{\delta,q}+\delta V_{\delta,q}\right)\phi_{\delta,q}d\mu_{g}\le C|W_{\delta,q}|_{L^{\frac{2n}{n+2}}(M)}|\phi_{\delta,q}|_{L^{\frac{2n}{n-2}}(M)}=O(\delta^{2})\|\phi_{\delta,q}\|_{H}=o(\varepsilon)
\]
Since $\left(\left(W_{\delta,q}+\delta V_{\delta,q}+\theta\phi_{\delta,q}\right)^{+}\right)^{\frac{2}{n-2}+\varepsilon}$
belongs to $L_{\tilde{g}}^{\frac{2(n-1)+n(n-2)\varepsilon}{2+(n-2)\varepsilon}}$and
since $2\left(\frac{2(n-1)+n(n-2)\varepsilon}{2+(n-2)\varepsilon}\right)'=\frac{4(n-1)+2n(n-2)\varepsilon}{2(n-2)+(n-1)(n-2)\varepsilon}<s_{\varepsilon}$,
by Holder inequality we obtain
\begin{multline*}
\int_{\partial M}\left(\left(W_{\delta,q}+\delta V_{\delta,q}+\theta\phi_{\delta,q}\right)^{+}\right)^{\frac{2}{n-2}+\varepsilon}\phi_{\delta,q}^{2}d\sigma_{g}\\
\le C\left(\left|W_{\delta,q}+\delta V_{\delta,q}+\theta\phi_{\delta,q}\right|_{L^{s_{\varepsilon}}(\partial M)}^{\frac{2}{n-2}}\right)\|\phi_{\delta,q}\|_{H}^{2}=o(\varepsilon).
\end{multline*}
By integration by parts we have 
\begin{multline}
\int_{M}\left(\nabla_{g}W_{\delta,q}+\delta\nabla_{g}V_{\delta,q}\right)\nabla_{g}\phi_{\delta,q}d\mu_{g}=-\int_{M}\Delta_{g}\left(W_{\delta,q}+\delta V_{\delta,q}\right)\phi_{\delta,q}d\mu_{g}\\
+\int_{\partial M}\left(\frac{\partial}{\partial\nu}W_{\delta,q}+\delta\frac{\partial}{\partial\nu}V_{\delta,q}\right)\phi_{\delta,q}d\sigma_{g}.\label{eq:parts}
\end{multline}
and, as in (\ref{eq:deltaw+v}), we get 
\begin{multline*}
\int_{M}\Delta_{g}\left(W_{\delta,q}+\delta V_{\delta,q}\right)\phi_{\delta,q}d\mu_{g}\\
\le|\Delta_{g}(W_{\delta,q}+\delta V_{\delta,q})|_{L^{\frac{2n}{n+2}}(M)}\|\phi_{\delta,q}\|_{H}=O(\delta^{2})\|\phi_{\delta,q}\|_{H}=o(\varepsilon),
\end{multline*}
and for the boundary term in (\ref{eq:parts}), in light of (\ref{eq:I2})
and the following formulas, we get
\begin{multline*}
\int_{\partial M}\left[\left(\frac{\partial}{\partial\nu}W_{\delta,q}+\delta\frac{\partial}{\partial\nu}V_{\delta,q}\right)-(n-2)\left(\left(W_{\delta,q}+\delta V_{\delta,q}\right)^{+}\right)^{\frac{n}{n-2}}\right]\phi_{\delta,q}d\sigma_{g}\\
=\left|(n-2)\left(\left(W_{\delta,q}+\delta V_{\delta,q}\right)^{+}\right)^{\frac{n}{n-2}}-\frac{\partial}{\partial\nu}W_{\delta,q}\right|_{L_{g}^{\frac{2(n-1)}{n}}(\partial M)}|\phi_{\delta,q}|_{L_{g}^{\frac{2(n-1)}{n-2}}(M)}\\
=O(\delta^{2})\|\phi_{\delta,q}\|_{H}=o(\varepsilon).
\end{multline*}
At this point it remains to estimate 
\[
\int_{\partial M}\left[\left(\left(W_{\delta,q}+\delta V_{\delta,q}\right)^{+}\right)^{\frac{n}{n-2}+\varepsilon}-\left(\left(W_{\delta,q}+\delta V_{\delta,q}\right)^{+}\right)^{\frac{n}{n-2}}\right]\phi_{\delta,q}d\sigma_{g}
\]
and we proceed as in (\ref{eq:feps-f0}) to get
\begin{multline*}
\int_{\partial M}\left[\left(\left(W_{\delta,q}+\delta V_{\delta,q}\right)^{+}\right)^{\frac{n}{n-2}+\varepsilon}-\left(\left(W_{\delta,q}+\delta V_{\delta,q}\right)^{+}\right)^{\frac{n}{n-2}}\right]\phi_{\delta,q}d\sigma_{g}\\
\le\left|\left(\left(W_{\delta,q}+\delta V_{\delta,q}\right)^{+}\right)^{\frac{n}{n-2}+\varepsilon}-\left(\left(W_{\delta,q}+\delta V_{\delta,q}\right)^{+}\right)^{\frac{n}{n-2}}\right|_{L^{\frac{2(n-1)}{n}}(\partial M)}\|\phi_{\delta,q}\|_{H}=o(\varepsilon),
\end{multline*}
and we conclude the proof.
\end{proof}


\begin{thebibliography}{10}
\bibitem{A1}S. Almaraz, \emph{ An existence theorem of conformal
scalar-flat metrics on manifolds with boundary.} Pacific J. Math.
\textbf{248} (2010), no. 1, 1\textendash 22.

\bibitem{A2}S. Almaraz, \emph{Blow-up phenomena for scalar-flat metrics
on manifolds with boundary}, J. Differential Equations \textbf{251}
(2011), no. 7, 1813-1840.

\bibitem{A3}S. Almaraz, \emph{A compactness theorem for scalar-flat
metrics on manifolds with boundary}, Calc. Var. Partial Differential
Equations \textbf{41} (2011), no. 3-4, 341\textendash 386.

\bibitem{AQW}S .Almaraz, Sérgio(BR-UFF-IMS), O. S. de Queiroz, S.
Wang, \emph{A compactness theorem for scalar-flat metrics on 3-manifolds
with boundary}, J. Funct. Anal. \textbf{277} (2019), 2092\textendash 2116.

\bibitem{ALM}A. Ambrosetti, Y. Li, A. Malchiodi, \emph{On the Yamabe
problem and the scalar curvature problems under boundary conditions},
\textbf{322} (2002), no. 4, 667\textendash 99.

\bibitem{aub}Aubin, T.: \emph{Équations différentielles non linéaires
et problème de Yamabe concernant la courbure scalaire}, J. Math. Pures
Appl. 55, 269\textendash 296 (1976)

\bibitem{bre}S. Brendle, \emph{Blow-up phenomena for the Yamabe equation},
J. Amer. Math. Soc. 21 (4) (2008) 951\textendash 979.

\bibitem{BC}S. Brendle, S. S. Chen, \emph{An existence theorem for
the Yamabe problem on manifolds with boundary}, J. Eur. Math. Soc.\textbf{
16} (2014), 991\textendash 1016. 

\bibitem{bre-mar}S. Brendle, F. Marques, \emph{Blow-up phenomena
for the Yamabe equation II}, J. Differential Geom. 81 (2009) 225\textendash 250.

\bibitem{DK}M. Disconzi, M. Khuri, \emph{Compactness and non-compactness
for the Yamabe problem on manifolds with boundary}, J. reine angew.
Math. \textbf{724} (2017), 145\textendash 201

\bibitem{DMO03}Z. Djadli, A. Malchiodi, M. Ould Ahmedou, \emph{Prescribing
scalar and boundary mean curvature on the three dimensional half sphere},
J. Geom. Anal. \textbf{13} (2003), 255\textendash 289.

\bibitem{DMO04}Z. Djadli, A. Malchiodi, M. Ould Ahmedou, \emph{The
prescribed boundary mean curvature problem on $\mathbb{B}^{4}$ },
J. Differential Equations \textbf{206} (2004), no. 2, 373\textendash 398.

\bibitem{dru} O. Druet, \emph{Compactness for Yamabe metrics in low
dimensions}, Int. Math. Res. Not. \textbf{23} (2004), 1143\textendash 1191.

\bibitem{dru-heb} O. Druet, E. Hebey, \emph{Blow-up examples for
second order elliptic PDEs of critical Sobolev growth}, Trans. Amer.
Math. Soc. \textbf{357} (2005), no. 5, 1915\textendash 1929 (electronic).

\bibitem{DH}O. Druet, E. Hebey, \emph{Sharp asymptotics and compactness
for local low energy solutions of critical elliptic systems in potential
form}, Calc. Var. Partial Differential Equations \textbf{31} (2008),
205\textendash 230.

\bibitem{DHR} O. Druet, E. Hebey, F. Robert, \emph{A C0-theory for
the blow-up of second order elliptic equations of critical Sobolev
growth}, Elect. Res. Ann. A.M.S, \textbf{9} (2003), 19\textendash 25.

\bibitem{Sou} M. de Souza, \emph{Compactness for a class of Yamabe-type
problems on manifolds with boundary, }J. Diff eq. \textbf{269} (2020)
3119\textendash 3159 

\bibitem{E92}J. F. Escobar, \emph{Conformal deformation of a Riemannian
metric to a scalar flat metric with constant mean curvature on the
boundary}. Ann. of Math. \textbf{136} (1992), no. 1, 1\textendash 50.

\bibitem{E92-b}J. F. Escobar, \emph{The Yamabe problem on manifolds
with boundary}, J. Differential Geom. \textbf{35} (1992), 21\textendash 84.

\bibitem{E96}J. F. Escobar,\emph{ Conformal metrics with prescribed
mean curvature on the boundary}, Calc. Var. Partial Differential Equations
\textbf{4} (1996), 559\textendash 592.

\bibitem{EPV} P. Esposito, A. Pistoia, J. Vétois, \emph{The effect
of linear perturbations on the Yamabe problem}, Math. Ann. \textbf{358}
(2014), 511\textendash 560.

\bibitem{FO03}V. Felli, M. Ould Ahmedou, \emph{Compactness results
in conformal deformations of Riemannian metrics on manifolds with
boundaries}, Math. Z. \textbf{244} (2003) 175\textendash 210.

\bibitem{GM20}M. Ghimenti, A.M. Micheletti, \emph{Compactness for
conformal scalar-flat metrics on umbilic boundary manifolds}, Nonlinear
Analysis, \textbf{200} (2020).

\bibitem{GMsub}M. Ghimenti, A.M. Micheletti, \emph{A compactness
result for scalar-flat metrics on low dimensional manifolds with umbilic
boundary}, preprint

\bibitem{GMdcds}M. Ghimenti, A.M. Micheletti, \emph{Compactness results
for linearly perturbed Yamabe problem on manifolds with boundary,}
http://arxiv.org/1912.11482

\bibitem{GMwip}M. Ghimenti, A.M. Micheletti, \emph{Blowing up solutions
for supercritical Yamabe problems on manifolds with umbilic boundary,}
preprint

\bibitem{GMP16}M. Ghimenti, A.M. Micheletti, A. Pistoia, \emph{On
Yamabe type problems on Riemannian manifolds with boundary}, Pacific
Journal of Math. \textbf{284} (2016), no. 1, 79\textendash 102.

\bibitem{GMP18}M. Ghimenti, A.M. Micheletti, A. Pistoia, \emph{Linear
perturbation of the Yamabe problem on manifolds with boundary}, J.
Geom. Anal. \textbf{28} (2018), 1315\textendash 1340. 

\bibitem{GMP19}M. Ghimenti, A.M. Micheletti, A. Pistoia, \emph{Blow-up
phenomena for linearly perturbed Yamabe problem on manifolds with
umbilic boundary }, J. Diff. Eq. \textbf{267} (2019) 587\textendash 618

\bibitem{HL99}Z-C. Han, Y.Y. Li, \emph{The Yamabe problem on manifolds
with boundary: existence and compactness results}, Duke Math. J. \textbf{99
}(1999), no. 3, 489\textendash 542.

\bibitem{khu-mar-sch}M. Khuri, F. Marques, R. Schoen, \emph{A compactness
theorem for the Yamabe problem}, J. Differential Geom. 81 (1) (2009)
143\textendash 196.

\bibitem{KMW19}S. Kim, M. Musso, J. Wei, \emph{Compactness of scalar-flat
conformal metrics on low-dimensional manifolds with constant mean
curvature on boundary}, arXiv:1906.01317

\bibitem{Nit}R. Nittka, \emph{Regularity of solutions of linear second
order elliptic and parabolic boundary value problems on Lipschitz
domains}. J. Diff. Eq. \textbf{251} (2011), 860\textendash 880.

\bibitem{Ma1}F. C. Marques, \emph{Existence results for the Yamabe
problem on manifolds with boundary}, Indiana Univ. Math. J. \textbf{54}
(2005), 1599\textendash 1620. 

\bibitem{Ma2}F. C. Marques, \emph{Conformal deformations to scalar-flat
metrics with constant mean curvature on the boundary}, Comm. Analysis
Geom. 15:2 (2007), 381\textendash 405.

\bibitem{MN}M. Mayer, C. B. Ndiaye, \emph{Barycenter technique and
the Riemann mapping problem of Cherrier-Escobar}, J. Differential
Geom. \textbf{107} (2017), 519\textendash 560.

\bibitem{MPV}A.M. Micheletti, A, Pistoia, J. Vétois, \emph{Blow-up
solutions for asymptotically critical elliptic equations on Riemannian
manifolds}, Indiana Univ. Math. J. \textbf{58} (2009), 1719\textendash 1746.

\bibitem{RV}F. Robert, J. Vétois, \emph{Examples of non-isolated
blow-up for perturbations of the scalar curvature equation on non-locally
conformally flat manifolds}, J. Differential Geom. \textbf{98} (2014),
349\textendash 356.

\bibitem{sch}Schoen, R.M.: \emph{Conformal deformation of a Riemannian
metric to constant scalar curvature} J. Differ. Geom. 20, 479\textendash 495
(1984) 

\bibitem{tru}Trudinger, N.: \emph{Remarks concerning the conformal
deformation of Riemannian structures on compact manifolds}, Annali
Scuola Norm. Sup. Pisa 22, 265\textendash 274 (1968)

\bibitem{yam}Yamabe, H.: \emph{On a deformation of Riemannian structures
on compact manifolds}, Osaka Math. J. 12, 21\textendash 37 (1960) 
\end{thebibliography}
\end{document}